\newtheorem{theorem}{Theorem}[section]
\newtheorem{lemma}[theorem]{Lemma}
\newtheorem{definition}[theorem]{Definition}
\newtheorem{fact}[theorem]{Fact}
\newtheorem{question}{Question}
\newcommand{\forceP}{\mathbb{P}}
\newcommand{\forceQ}{\mathbb{Q}}
\newcommand{\forceR}{\mathbb{R}}
\newcommand{\ZFP}{\mathsf{ZF}^-}
\newcommand{\BPFA}{\mathsf{BPFA}}
\newcommand{\CH}{\mathsf{CH}}
\def\undertilde#1{\mathord{\vtop{\ialign{##\crcr
$\hfil\displaystyle{#1}\hfil$\crcr\noalign{\kern1.5pt\nointerlineskip}
$\hfil\tilde{}\hfil$\crcr\noalign{\kern1.5pt}}}}}
\title{$\mathsf{MA} (\mathcal{I}$) and a Failure of Separation on the third Level.} 
\author{ Stefan Hoffelner$^{1}$\footnote{This research was funded in whole by the Austrian Science Fund (FWF) Grant-DOI 10.55776/P37228. }  }
\date{
    $^1$TU Wien \\
    \today
}
\begin{document}

\maketitle
\begin{abstract}
We present a method which forces the failure of $\Pi^1_3$ and $\Sigma^1_3$-separation, while $\mathsf{MA} (\mathcal{I}$) holds, for $\mathcal{I}$ the family of indestructible ccc forcings. This shows that, in contrast to the assumption $\BPFA$ and $\aleph_1=\aleph_1^L$ which implies $\Pi^1_3$-separation, that weaker forcing axioms do not decide separation on the third projective level.
\end{abstract}

\section{Introducion}
Separation is one of the three classical and fundamental properties of definable subsets of the reals with numerous applications in descriptive set theory, measure theory, functional analysis and ergodic theory (see  e.g. the classical textbooks \cite{Moschovakis} or \cite{Kechris}).

Recall first the definition of separation.
\begin{definition}
 We say that a projective pointclass $\Gamma \in \{ \Sigma^1_n \mid n \in \omega\} \cup \{\Pi^1_n \mid n \in \omega \}$ has the separation 
 property (or just separation) iff every pair $A_0$ and $A_1$ of disjoint elements of  $\Gamma$ has a separating set $C$, i.e. a set $C$ such that $A_0 \subset C$ and $A_1 \subset \omega^{\omega} \setminus C$ and such that $C
   \in \Gamma \cap \check{\Gamma}$, where  $\check{\Gamma}$ denotes the dual pointclass of $\Gamma$.
\end{definition}
Now we turn to uniformization. Recall that for an $A \subset 2^{\omega} \times 2^{\omega}$, we say that $f$ is a uniformization (or a uniformizing function) of
$A$ if $f: pr_1 (A) \rightarrow 2^{\omega}$, where $pr_1(A)$ is $A$'s projection on the first coordinate, and the graph of $f$ is a subset of $A$. In other words, $f$ chooses exactly one point of every non-empty section of $A$.

\begin{definition}
 We say that the $\Gamma$-uniformization property is true, if 
every set $A \subset 2^{\omega} \times 2^{\omega}$, $A \in \Gamma$ has a uniformizing function $f_A$ whose graph is $\Gamma$.
\end{definition}
It is a classical result of Novikov that if $\Gamma$ has the uniformization property, then $\Gamma$ does not have the separation property.

Starting point for this work is the following, on the surface surprising theorem (see \cite{Ho1}):
\begin{theorem}
Assume $\BPFA$ and $\aleph_1=\aleph_1^{L}$. Then the  ${\Sigma}^1_3$-uniformization property holds, hence $\Pi^1_3$-separation holds as well and $\Sigma^1_3$-separation fails.
\end{theorem}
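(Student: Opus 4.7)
The three assertions split into two tasks. The separation claims would follow from uniformization by classical arguments: $\Sigma^1_3$-uniformization implies $\Sigma^1_3$-reduction (by partitioning sections), which dualizes to $\Pi^1_3$-separation; and by the Novikov result cited above, $\Sigma^1_3$-separation must fail once $\Sigma^1_3$-uniformization is established. So the actual work is to produce $\Sigma^1_3$-uniformization.

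My plan is to exhibit a $\Delta^1_3$-wellordering $<^*$ of $2^\omega$. Once $<^*$ is available, uniformization is immediate: given $A\in\Sigma^1_3$, the $<^*$-least selector $f_A$ has graph defined by ``$(x,y)\in A$ and no $z<^* y$ lies in $A_x$'', which is the conjunction of a $\Sigma^1_3$- and a $\Pi^1_3$-clause and hence $\Sigma^1_3$ because $<^*$ is $\Delta^1_3$.

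To build $<^*$, I would combine the two hypotheses in a coding scheme of Caicedo--Veli\v{c}kovi\'c type. The enabling general fact is that under $\aleph_1=\aleph_1^L$ every $\Sigma_1$-formula over $H(\omega_2)$ with parameters in $H(\omega_2)$ translates into a $\Sigma^1_3$-formula over the reals: subsets of $\omega_1$ can be almost-disjoint coded into reals against an $L$-canonical a.d.\ family, and elements of $H(\omega_2)$ are coded by subsets of $\omega_1$. $\BPFA$ says precisely that any such $\Sigma_1(H(\omega_2))$-statement forceable by a proper forcing is true in $V$. Accordingly I would declare $x<^* y$ iff there exists a real $c$ which codes a candidate $L$-wellordering of the relevant piece of $H(\omega_2)$ placing $x$ before $y$ and whose integrity is certifiable by a $\Sigma_1(H(\omega_2))$-property. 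For any pair of reals such a $c$ can be produced by a proper forcing (a reshape followed by almost-disjoint coding), so $\BPFA$ supplies it in $V$; uniqueness of canonical codes will give totality, transitivity and wellfoundedness.

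The main technical obstacle is making this coding construction fit the $\Sigma_1$-absoluteness that $\BPFA$ provides. The reshaping step, which forces every proper initial segment of a set $A\subseteq\omega_1$ to be countable in $L[A]$, is naturally $\Pi_2$ over $H(\omega_2)$, so the reshape/a.d.-code iteration has to be arranged so that its overall effect is captured by a single $\Sigma_1(H(\omega_2))$-statement. Additionally I would need to verify that the iteration is proper, that the $L$-canonical a.d.\ sequence survives (which is exactly what $\aleph_1=\aleph_1^L$ delivers), and that every real in $V$ eventually receives a unique code. These are the delicate points on which the definability calculation ultimately hinges.
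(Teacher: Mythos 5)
This theorem is not proved in the present paper at all; it is imported from \cite{Ho1}, so there is no in-text argument to compare against. Your reduction of the two separation claims to $\Sigma^1_3$-uniformization is the standard one and is fine (uniformization $\Rightarrow$ reduction $\Rightarrow$ dual separation, plus Novikov for the failure of $\Sigma^1_3$-separation).

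The core of your proposal, however, has a genuine gap. You define $f_A(x)$ as the $<^*$-least element of $A_x$ and claim the graph, being ``the conjunction of a $\Sigma^1_3$- and a $\Pi^1_3$-clause,'' is $\Sigma^1_3$ because $<^*$ is $\Delta^1_3$. That inference is false: a conjunction of a $\Sigma^1_3$ set with a $\Pi^1_3$ set is in general only in the difference hierarchy over $\Sigma^1_3$ (so $\Delta^1_4$), not $\Sigma^1_3$. The classical repair (Addison's argument in $L$) pushes the bounded quantifier $\forall z <^* y$ inside by existentially quantifying over a real enumerating the $<^*$-predecessors of $y$ and by minimizing over the \emph{witness pair} of the outer existential quantifier, so that the matrix lands in $\Sigma^1_{n-1}\wedge\Pi^1_{n-1}\subseteq\Delta^1_n$. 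This repair is unavailable here: $\BPFA$ implies $2^{\aleph_0}=\aleph_2$, so any wellordering of the reals has uncountable proper initial segments, which cannot be enumerated by a single real. Hence even granting the Caicedo--Friedman-style $\Sigma^1_3$ (equivalently $\Delta^1_3$, since a total $\Sigma^1_3$ linear order is automatically $\Delta^1_3$) wellordering from $\BPFA+\aleph_1=\aleph_1^L$, a ``least element'' selector does not have a $\Sigma^1_3$ graph by any argument you have given. The known proof avoids minimization altogether: using the $L$-definable coding apparatus and the genuine $\Sigma_1(H(\omega_2))$-absoluteness supplied by $\BPFA$, one arranges for each $x$ that exactly one witness $y\in A_x$ (together with a witness to the outermost existential quantifier of $A$) is \emph{coded} at a block of the coding structure determined by $x$; uniqueness is built into the coding, so the graph is the purely existential statement ``some real certifies that $y$ is coded at $x$'s block,'' which is $\Sigma^1_3$ outright, with no complementary $\Pi^1_3$ clause to absorb. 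Without an idea of this kind your plan does not close.
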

It is natural to ask whether already weaker forcing axioms axioms are sufficient to settle separation on the third projective level in the same way. We show that they do not suffice. Let 
$\mathsf{MA} (\mathcal{I} )$ be Martin's axiom for the class of indestructible ccc forcings, i.e. for those forcing $\forceP$ such that $\forceP \times \forceQ$ is ccc for every ccc forcing $\forceQ$. The main theorem of this article is the following:
\begin{theorem}
There is a model of $\mathsf{MA} (\mathcal{I} )$ where  $\aleph_1=\aleph_1^L$ and both $\Sigma^1_3$ and $\Pi^1_3$-separation fail.
\end{theorem}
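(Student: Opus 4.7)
The plan is to construct the desired model via a finite-support iteration $\langle \forceP_\alpha, \dot{\forceQ}_\alpha : \alpha < \omega_2 \rangle$ over $L$ which interleaves two sorts of forcings, both indestructibly ccc. At stages dictated by a $\diamondsuit_{\omega_2}$-bookkeeping, $\dot{\forceQ}_\alpha$ is a name for an indestructibly ccc poset of size $\aleph_1$ from a pre-enumerated list, so that every such poset is eventually met by a generic filter; this will force $\mathsf{MA}(\mathcal{I})$ in the final extension. At the remaining stages, $\dot{\forceQ}_\alpha$ is an almost-disjoint coding forcing relative to a fixed $\Sigma^1_2$-definable almost disjoint family $\mathcal{A} \in L$; such coding forcings are Suslin, hence indestructibly ccc, and they write a single real $r_\alpha$ that codes a prescribed piece of data. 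Since the iteration consists entirely of indestructibly ccc forcings, it is itself ccc and preserves $\aleph_1 = \aleph_1^L$.

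The data coded at the coding stages is designed to defeat separation at the third projective level. Because the $\mathcal{A}$-decoding predicate is $\Sigma^1_2$, one can define two disjoint $\Pi^1_3$-sets $A_0, A_1$ and two disjoint $\Sigma^1_3$-sets $B_0, B_1$ whose membership predicates refer to the existence or non-existence of $\mathcal{A}$-codes of certain $L$-ordinals computed from the bookkeeping. A second layer of bookkeeping enumerates all potential $\Delta^1_3$-separators, which are parameterised by reals appearing at some intermediate stage of the iteration; at each stage the coded real is chosen so as to defeat the corresponding candidate, either by falling on the wrong side of it or by forcing its defining $\Sigma^1_3$- and $\Pi^1_3$-formulas to disagree. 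Because every separator in the final model is parameterised by a real, and every real is added at some stage, the enumeration is complete and no $\Delta^1_3$-separator survives.

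The technical heart and main obstacle is the interaction between the two types of forcings. First, one must verify that the $\mathsf{MA}(\mathcal{I})$-bookkeeping stages do not inadvertently create $\mathcal{A}$-codes for patterns that were not prescribed; this uses $\Sigma^1_2$-absoluteness together with a careful choice of $\mathcal{A}$, so that new codes can only arise by explicit action of the iteration. Second, one must ensure that the coding stages, while destroying intended separator candidates, do not create new ones that escape the enumeration; a master bookkeeping, combined with the observation that indestructibly ccc forcings are closed under products, allows us to anticipate all candidate separators along the iteration. Once these preservation facts are in place, both $\mathsf{MA}(\mathcal{I})$ and the simultaneous failure of $\Pi^1_3$- and $\Sigma^1_3$-separation follow.
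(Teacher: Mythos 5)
Your high-level outline (a length-$\omega_2$ finite support iteration over $L$ interleaving $\mathsf{MA}(\mathcal{I})$-stages with coding stages, plus a bookkeeping that diagonalizes against candidate separators) matches the shape of the paper's construction, but two essential mechanisms are missing, and at the points where you acknowledge difficulties you appeal to exactly the facts that fail. First, plain almost disjoint coding relative to a fixed family $\mathcal{A}\in L$ does not yield a controllable third-level predicate: an arbitrary indestructibly ccc forcing of size $\aleph_1$ handed to you by the $\mathsf{MA}(\mathcal{I})$-bookkeeping can add reals that happen to satisfy the $\mathcal{A}$-decoding predicate, and ``$\Sigma^1_2$-absoluteness together with a careful choice of $\mathcal{A}$'' gives you no handle on this --- $\Sigma^1_2$-absoluteness cuts the wrong way, since it makes accidental codes persist rather than preventing them. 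The paper's solution is a genuinely different coding device: an $\omega_2$-sequence of \emph{independent} Suslin trees, made $\Sigma_1(\omega_1)$-definable over $H(\omega_2)$ by writing them into $L$-stationary sets in a preparatory extension $W$ of $L$. The statement ``$x$ is coded into $\vec{S}^i$'' then asserts the existence of branches through specific trees in specific inner models (localized via David's trick to a $\Sigma^1_3$ form), and it can only become true by deliberately adding those branches; it stays false under every other forcing in the iteration because indestructibly ccc forcings preserve Suslin trees (their product with a Suslin tree is ccc) and independence ensures that coding on one $\omega$-block does not kill trees on another. This is also precisely why the theorem is stated for $\mathsf{MA}(\mathcal{I})$ rather than full $\mathsf{MA}$: without indestructibility the $\mathsf{MA}$-stages could destroy the trees and ruin the coding. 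Your proposal has no analogue of this preservation structure.

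Second, your diagonalization step --- ``the coded real is chosen so as to defeat the corresponding candidate, either by falling on the wrong side of it or by forcing its defining formulas to disagree'' --- presupposes that you can control, at stage $\beta$, which side of a candidate $\Sigma^1_3$-pair $(A_m,A_k)$ a real $x$ will lie on in the \emph{final} model. But $\Sigma^1_3$-membership is only upward absolute: later stages can move $x$ into $A_m$ or $A_k$ after you have committed $x$ to $D^0$ or $D^1$. The engine the paper uses to resolve this is a product argument for a restricted class of ``suitable'' forcings: two suitable forcings whose coding areas (sets of used trees) are disjoint have a suitable product, so if no suitable forcing places $x$ into $A_m\cap A_k$, then after excluding the countably many trees potentially used by a forcing witnessing $x\in A_m$, no remaining suitable forcing can ever place $x$ into $A_k$ (else the product would put $x$ into both, contradiction via upward absoluteness). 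This dichotomy is what lets the construction commit $x$ to one of $D^0,D^1$, pass to a second real $x'$, and produce a genuine witness that $A_m,A_k$ cannot separate $D^0,D^1$ in all further suitable extensions; the evolving notions of $\alpha_\beta$-suitability (forbidden tree indices $N_\beta$ and the partial function $E_\beta$) record these commitments so that they are honored by the $\mathsf{MA}(\mathcal{I})$-tail. Without this closure-under-products argument and the bookkeeping of forbidden coding areas, your claim that ``no $\Delta^1_3$-separator survives'' does not follow, since a candidate you ``defeated'' at stage $\beta$ can be repaired by a later stage of your own iteration.
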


Forcing a failure of $\Pi$-separation in the presence of forcing axioms is considerably more delicate than it might appear on a first glance. Aside from theorem 1.4, this is additionally explained by the fact that there is a very general method to force the global $\Sigma$-uniformization property (see \cite{Ho6}) and $\Sigma$-uniformization implies $\Pi$-separation. Thus any attempt to force the failure of $\Pi$-separation together with some forcing axiom, must be such that it can not be combined with the method from \cite{Ho6}, which is albeit technicalities a straightforward iteration with lots of flexibility. So failures of $\Pi$-separation in the presence of forcing axioms are necessarily in need of somewhat unconventional constructions. This article is an attempt to further explore the tension between forcing axioms and separation in the presence of the anti-large cardinal axiom $\omega_1=\omega_1^L$.

There exists literature already which deals with the failure of separation. L. Harrington, in handwritten notes from 1974 devised a forcing which forces the 
 failure of ${\Pi^1_n}$ and ${\Sigma^1_n}$-separation for a fixed $n \ge 3$. This result was proved in detail in \cite{KL} using different methods. Both constructions are very specific universes of set theory and the proof of the failure of separation involves  homogeneity considerations of the forcings involved, which will get lost once we start to throw in additional forcings in order to work for $\mathsf{MA} (\mathcal{I} )$. Thus in order to approach a proof of our main theorem a new technique is in need. The method presented here approaches the problem from a very different angle and builds on some techniques and ideas from \cite{Ho1}, \cite{Ho2}, \cite{Ho3}, \cite{Ho7}, in that we construct an iteration which produces two pairs of set on the third projective level which diagonalize against all possible pairs of $\Sigma^1_3$ or $\Pi^1_3$ sets which are candidates for  separating them.

\section{Independent Suslin trees in $L$, almost disjoint coding}
The coding method of our choice utilizes Suslin trees, which can be generically destroyed in an independent way of each other. 
\begin{definition}
 Let $\vec{T} = (T_{\alpha} \, : \, \alpha < \kappa)$ be a sequence of Suslin trees. We say that the sequence is an 
 independent family of Suslin trees if for every finite set of pairwise distinct indices $e= \{e_0, e_1,...,e_n\} \subset \kappa$ the product $T_{e_0} \times T_{e_1} \times \cdot \cdot \cdot \times T_{e_n}$ 
 is a Suslin tree again.
\end{definition}
Note that an independent sequence of Suslin trees $\vec{T} = (T_{\alpha} \, : \, \alpha < \kappa)$ has the property that whenever we decide to generically add branches to some of its members, then all the other members of $\vec{T}$ remain Suslin in the resulting generic extension. Indeed, if $A \subset \kappa$ and we form 
$ \prod_{i \in A} T_i $
with finite support, then in the resulting generic extension $V[G]$, for every $\alpha \notin A$, $V[G] \models `` T_{\alpha}$ is a Suslin tree$"$.

One can easily force the existence of independent sequences of Suslin trees with products of Jech's or Tennenbaum's forcing, or with just products of ordinary Cohen forcing. On the other hand independent sequences of length $\omega_1$ already exist in $L$.

Whenever we force with a Suslin tree $(T,<_T)$, i.e. we force with its nodes to add an uncountable branch, we denote the forcing with $T$ again.

We briefly introduce the almost disjoint coding forcing due to R. Jensen and R. Solovay (see \cite{JS}). We will identify subsets of $\omega$ with their characteristic function and will use the word reals for elements of $2^{\omega}$ and subsets of $\omega$ respectively.
Let $D=\{d_{\alpha} \, \: \, \alpha < \aleph_1 \}$ be a family of almost disjoint subsets of $\omega$, i.e. a family such that if $r, s \in D$ then 
$r \cap s$ is finite. Let $X\subset  \omega$  be a set of ordinals. Then there 
is a ccc forcing, the almost disjoint coding $\mathbb{A}_D(X)$ which adds 
a new real $x$ which codes $X$ relative to the family $D$ in the following way
$$\alpha \in X \text{ if and only if } x \cap d_{\alpha} \text{ is finite.}$$
\begin{definition}\label{definitionadcoding}
 The almost disjoint coding $\mathbb{A}_D(X)$ relative to an almost disjoint family $D$ consists of
 conditions $(r, R) \in [\omega]^{<\omega} \times D^{<\omega}$ and
 $(s,S) < (r,R)$ holds if and only if
 \begin{enumerate}
  \item $r \subset s$ and $R \subset S$.
  \item If $\alpha \in X$ and $d_{\alpha} \in R$ then $r \cap d_{\alpha} = s \cap d_{\alpha}$.
 \end{enumerate}
\end{definition}
We shall briefly discuss the $L$-definable, $\aleph_1^L$-sized almost disjoint family of reals $D$  we will use throughout this article. The family $D$ is the canonical almost disjoint family one obtains when recursively adding the $<_L$-least real $x_{\beta}$ not yet chosen and replace it with $d_{\beta} \subset \omega$ where that $d_{\beta}$  is the real which codes the initial segments of $x_{\beta}$ using some recursive bijections between $\omega$ and $\omega^{<\omega}$.

\section{The ground model $W$}

We have to first create a suitable ground model $W$ over which the actual iteration will take place. The construction is inspired by \cite{Ho2}, where a very similar universe is used as well. $W$ will be a generic extension of $L$, satisfying $\CH$ and {having} the crucial property that in $W$ there is an $\omega_2$-sequence $\vec{S}$ of independent Suslin trees which is $\Sigma_1(\omega_1)$-definable over $H(\omega_2)^W$. The sequence $\vec{S}$ will enable a coding method which is independent of the surrounding universe, a feature we will exploit to a great extent in {what follows}.

To form $W$, we start with G\"odels constructible universe $L$ as our 
ground model. Recall that $L$ comes equipped with a $\Sigma_1$-definable, global well-order $<_L$ of its elements.
We first fix an appropriate sequence of stationary, co-stationary subsets of $\omega_1$ using Jensen's $\diamondsuit$-sequence. The proof of the next lemma is well-known so we skip it.
\begin{fact}
In $L$ there is a sequence $(a_{\alpha} \, : \, \alpha < \omega_1)$ of countable subsets of $\omega_1$
such that any set $A \subset \omega_1$ is guessed stationarily often by the $a_{\alpha}$'s, i.e.
$\{ \alpha < \omega_1 \, : \, a_{\alpha}= A \cap \alpha \}$ is a stationary and co-stationary subset of $\omega_1$. The sequence $(a_{\alpha} \, : \, \alpha < \omega_1)$ can be defined in a $\Sigma_1$ way over the structure $L_{\omega_1}$.
\end{fact}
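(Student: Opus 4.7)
My plan is to mimic Jensen's classical construction of a $\diamondsuit_{\omega_1}$-sequence in $L$ and then observe that the guessing set is automatically both stationary and co-stationary. Define the sequence $(a_\alpha : \alpha < \omega_1)$ by recursion on $\alpha$: set $a_\alpha$ to be the first coordinate of the $<_L$-least pair $(a, C) \in L_{\omega_1}$ such that $a \subset \alpha$, $C$ is a club in $\alpha$, and $a_\beta \neq a \cap \beta$ for every $\beta \in C$; if no such pair exists, set $a_\alpha = \emptyset$. Since the predicates ``$C$ is club in $\alpha$'', ``$a_\beta = a \cap \beta$'' and the well-order $<_L$ restricted to $L_{\omega_1}$ are $\Sigma_1$ over $L_{\omega_1}$, the whole recursion, and hence the sequence itself, is $\Sigma_1$-definable over $L_{\omega_1}$, giving the last clause of the fact.

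Next I would show stationary guessing by the standard condensation argument. Suppose toward a contradiction that some $A \subset \omega_1$ is not guessed on a stationary set, and let $(A, C)$ be the $<_L$-least pair witnessing this, i.e.\ $C$ is a club disjoint from $\{\alpha : a_\alpha = A \cap \alpha\}$. Pick a countable $M \prec L_{\omega_2}$ containing $A$, $C$, and enough parameters to compute the definition of the sequence. The condensation lemma in $L$ gives a transitive collapse $M \cong L_\gamma$ for some countable $\gamma$, with $M \cap \omega_1 = \alpha$ for a countable ordinal $\alpha$. The images of $A$ and $C$ under the collapse are $A \cap \alpha$ and $C \cap \alpha$, and $\alpha$ is a limit point of $C$, hence $\alpha \in C$. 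By elementarity, $(A \cap \alpha, C \cap \alpha)$ is the $<_L$-least pair in $L_\gamma$ witnessing failure of guessing below $\alpha$, so the recursion at stage $\alpha$ picks $a_\alpha = A \cap \alpha$. This contradicts $\alpha \in C$ and the choice of $(A, C)$.

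Finally, co-stationarity follows from the stationary guessing already established: for any $A \subset \omega_1$ choose $B \neq A$, fix $\beta < \omega_1$ in the symmetric difference of $A$ and $B$, and apply the diamond property to $B$. Then $\{\alpha > \beta : a_\alpha = B \cap \alpha\}$ is a stationary set on which $a_\alpha \neq A \cap \alpha$, so the complement of $\{\alpha : a_\alpha = A \cap \alpha\}$ is stationary. The main obstacle, as usual, is the condensation step; once one sets up the elementary submodel correctly and verifies that the defining $<_L$-minimization is absolute between $L_\gamma$ and $L_{\omega_2}$, everything else is bookkeeping.
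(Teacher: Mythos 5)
Your proof is correct: the paper explicitly omits the argument as well known, and what you give is precisely that standard proof — Jensen's $<_L$-least-counterexample recursion, the condensation/reflection argument for stationarity (where, as you note, the only real work is checking that the $<_L$-minimization at stage $\alpha$ is computed the same way in $L_\gamma$ as in $L_{\omega_2}$, which holds because $<_L$ respects order of construction), and the symmetric-difference trick to get co-stationarity from stationary guessing of some $B\ne A$. No gaps worth flagging.
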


The $\diamondsuit$-sequence can be used to produce an easily definable sequence of $L$-stationary, co-stationary subsets of $\omega_1$: we list the elements of $L_{\omega_2}$ in an $\omega_2$ sequence $(r_{\alpha} \, : \, \alpha < \omega_2)$. 
We fix \[R:= \{\alpha < \omega_1 \, : \, a_{\alpha}= r_0 \cap \alpha \} \] and note that $R$ is stationary and co-stationary. 

Then we define for every $\beta < \omega_2$, $\beta \ne 0$
a stationary, co-stationary set in the following way:
\[R'_{\beta} := \{ \alpha < \omega_1 \, : \, a_{\alpha}= {r}_{\beta} \cap \alpha \}\]
and 
\[R_{\beta} := \{ \alpha < \omega_1 \, : \, a_{\alpha}= {r}_{\beta} \cap \alpha \} \backslash R.\]
 It is clear that $\forall \alpha \ne \beta (R_{\alpha} \cap R_{\beta} \in \hbox{NS}_{\omega_1})$ and that $R_{\beta} \cap R=\emptyset$. 
To avoid writing $\beta \ne 0$ all the time we re-index and confuse $(R_{\beta} \, : \, \beta < \omega_2, \beta \ne 0)$ with $(R_{\beta} \, : \, \beta < \omega_2)$.
We derive the following standard result concerning the definability of the $R_{\beta}$'s. A detailed proof can be found in \cite{Ho1}, Lemma 1.12.
\begin{lemma}\label{computationofRbetas}
For any $\beta < \omega_2$, membership in $R_{\beta}$ is uniformly $\Sigma_1(\omega_1)$-definable over the model $L_{\omega_2}$, i.e. there is a $\Sigma_1$-formula with $\omega_1$ as a parameter $\psi(v_0,v_1, \omega_1)$ such that for every $\beta < \omega_2$,
$(\alpha \in R_{\beta} \Leftrightarrow L_{\omega_2} \models \psi(\alpha, \beta,\omega_1))$. 
\end{lemma}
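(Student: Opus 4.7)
The plan is to unwind the definition of $R_\beta$ and exhibit an explicit $\Sigma_1$-formula $\psi(v_0,v_1,\omega_1)$ defining it over $L_{\omega_2}$. By construction, $\alpha\in R_\beta$ iff $a_\alpha=r_\beta\cap\alpha$ and $a_\alpha\ne r_0\cap\alpha$. The two ingredients needed to express this are (i) the $\diamondsuit$-sequence $(a_\alpha)$, which by Fact 3.1 is $\Sigma_1$-definable over $L_{\omega_1}$, and (ii) the canonical $<_L$-enumeration $(r_\gamma)$ of $L_{\omega_2}$, which is uniformly $\Sigma_1$-definable in $L$ from the $\Sigma_1$-definition of the $L$-hierarchy and its well-order.

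I would let $\psi(\alpha,\beta,\omega_1)$ assert: there exists a countable ordinal $\gamma$ and a transitive $L_\gamma$ such that $\omega_1,\alpha,\beta\in L_\gamma$, $L_{\omega_1+1}\subseteq L_\gamma$, $L_\gamma\models\ZFP$, $\omega_1$ is still a cardinal in $L_\gamma$, and there exist $a,r,s\in L_\gamma$ for which $L_\gamma$ verifies: $a$ is the $\alpha$-th element of the $\diamondsuit$-sequence obtained from the $\Sigma_1$-definition of Fact 3.1, $r$ is the $\beta$-th and $s$ the $0$-th element of the $<_L$-enumeration of the universe, and the relations $a=r\cap\alpha$ and $a\ne s\cap\alpha$ hold. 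All unbounded quantifiers are existential and range over $L_{\omega_2}$, so $\psi$ is manifestly $\Sigma_1(\omega_1)$ over $L_{\omega_2}$.

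For the equivalence, if $\alpha\in R_\beta$ I pick $\gamma<\omega_2$ large enough that $L_\gamma$ contains $r_\beta,r_0,a_\alpha$ and satisfies the required fragment of $\ZFP$; such $\gamma$ exists by standard condensation arguments in $L$, so the witnesses $a:=a_\alpha$, $r:=r_\beta$, $s:=r_0$ work. Conversely, given a witnessing $L_\gamma$: the $\Sigma_1$-definition of the $\diamondsuit$-sequence is absolute between $L_\gamma$ and $L$ because $L_\gamma\supseteq L_{\omega_1}$ with $\omega_1^{L_\gamma}=\omega_1$, so the computed $a$ is really $a_\alpha$; likewise the $<_L$-enumeration is absolute because $L_\gamma$ contains $\beta$ and $L_\gamma$'s copy of the $L$-hierarchy agrees with $L$ up through $r_\beta$, so $r=r_\beta$ and $s=r_0$. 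The two membership conditions then transfer upwards immediately.

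The delicate point I expect is the absoluteness bookkeeping: one must check that a single fragment of $\ZFP$, together with the closure conditions imposed on $L_\gamma$, simultaneously makes both the $\diamondsuit$-computation and the $<_L$-enumeration absolute, uniformly in $\beta<\omega_2$. This is essentially the same task carried out in the analogous Lemma 1.12 of \cite{Ho1} that the statement cites, so that argument supplies a template; the point is to ensure the formula $\psi$ really does not depend on $\beta$ beyond its appearance as a parameter inside "the $\beta$-th element of $<_L$", which is already a uniform $\Sigma_1$-notion.
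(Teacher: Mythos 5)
Your argument is correct and is essentially the standard proof the paper itself omits and delegates to \cite{Ho1}, Lemma 1.12: unwind $\alpha\in R_\beta$ into $a_\alpha=r_\beta\cap\alpha$ and $a_\alpha\ne r_0\cap\alpha$, and express both by asserting the existence of a level $L_\gamma$ containing the witnesses, using that the $\diamondsuit$-sequence is $\Sigma_1$ over $L_{\omega_1}$ and that $<_L\restriction L_\gamma$ is an initial segment of $<_L\restriction L_{\omega_2}$, so the computations are upward absolute. One slip to fix: you ask for a \emph{countable} ordinal $\gamma$ while also demanding $\omega_1\in L_\gamma$ and $L_{\omega_1+1}\subseteq L_\gamma$, which is contradictory; as your verification direction shows, you mean $\gamma<\omega_2$, and since the existential quantifier over such $\gamma$ still ranges inside $L_{\omega_2}$, the formula remains $\Sigma_1(\omega_1)$ over $L_{\omega_2}$.
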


We proceed with defining the universe $W$.
First we generically add $\aleph_2$-many Suslin trees using of Jech's Forcing $ \forceP_J$. We let 
\[\forceQ^0 := \prod_{\beta \in \omega_2} \forceP_J \] using countable support. This is a $\sigma$-closed, hence proper notion of forcing. We denote the generic filter of $\forceQ^0$ with $\vec{S}=(S_{\alpha} \, : \, \alpha < \omega_2)$ and note that $\vec{S}$ is independent.  We fix a definable bijection between {$[\omega_1]^{2}$ and $\omega_1$} and identify the trees in {$(S_{\alpha }\, : \, \alpha < \omega_2)$} with their images under this bijection, so the trees will always be subsets of $\omega_1$ from now on.

In a second step we code the trees from $\vec{S}$ into the sequence of $L$-stationary subsets $\vec{R}$ we produced earlier. It is important to note that the forcing we are about to define does preserve Suslin trees, a fact we will show later.
The forcing used in the second step will be denoted by $\mathbb{Q}^1$ and will be a countable support iteration of length {$\omega_2$} whose factors are itself countably supported iterations of length $\omega_1$. Fix first a definable bijection $h \in L_{\omega_3}$ between $\omega_1 \times \omega_2$ and $\omega_2$ and write $\vec{R}$ from now on in ordertype $\omega_1 \cdot \omega_2$ making implicit use of $h$, so we assume that $\vec{R}= (R_{\alpha} \, : \, \alpha < \omega_1 \cdot \omega_2)$. 

$\forceQ^1$ is defined via induction in the ground model $L[\forceQ^0]$. Assume we are at stage $\alpha < \omega_2$ and we have already created the iteration $\forceQ^1_{\alpha}$ up to stage $\alpha$. We work with $L[\forceQ^0][\forceQ^1_{\alpha}]$ as our ground model
 and consider the Suslin tree $S_{\alpha} \subset \omega_1$. 
 We define the forcing we want to use at stage $\alpha$, denoted by $\forceQ^1 (\alpha)$, as the countable support iteration which codes the characteristic function of $S_{\alpha}$ into the $\alpha$-th $\omega_1$-block of the $R_{\beta}$'s. So $\forceQ^1 (\alpha)= \Asterisk_{\gamma < \omega_1} \forceR_{\alpha}(\gamma)$ is again a countable support iteration in $L[\forceQ^0][\forceQ^1_{\alpha}]$, defined inductively via
\[ \forall \gamma < \omega_1 \,(\forceR_{\alpha}(\gamma):= \dot{\forceP}_{\omega_1 \backslash R_{\omega_1 \cdot \alpha + 2 \gamma +1}}) \text{ if } S_{\alpha} (\gamma) =0 \]
and
\[ \forall \gamma < \omega_1 \, (\forceR_{\alpha}(\gamma):= \dot{\forceP}_{\omega_1 \backslash R_{\omega_1 \cdot \alpha + 2 \gamma}}) \text{ if } S_{\alpha} (\gamma) =1. \]

Recall that we let $R$ be a stationary, co-stationary subset of $\omega_1$ which is disjoint from all the $R_{\alpha}$'s which are used.  We let $\mathbb{Q}^1$ be the countably supported iteration, $$\mathbb{Q}^1:=\Asterisk_{\alpha< \omega_2} \forceQ^1_{\alpha}$$ which is an $R$-proper forcing. We shall see later that $\forceQ^1$ in fact is $\omega$-distributive, hence the iteration $\forceQ^1$ is in fact a countably supported product.
This way we can turn the generically added sequence of Suslin trees $\vec{S}$ into a definable sequence of Suslin trees.
Indeed, if we work in $L[\vec{S}\ast G]$, where $\vec{S} \ast G$ is $\forceQ^0 \ast \mathbb{Q}^1$-generic over $L$, then
\begin{align*}
\forall \alpha< \omega_2, \gamma < \omega_1 (&\gamma \in S_{\alpha} \Leftrightarrow R_{\omega_1 \cdot \alpha + 2 \cdot \gamma} \text{ is not stationary and} \\ &
\gamma \notin S_{\alpha} \Leftrightarrow  R_{\omega_1 \cdot \alpha + 2 \cdot \gamma +1} \text{ is not stationary})
\end{align*}
Note here that the above formula can be used to make every $S_{\alpha}$ $\Sigma_1(\omega_1,\alpha)$ definable over $L[\vec{S} \ast G]$ as is shown with the next lemma.

\begin{lemma}\label{definabilityofvecS}
There is a $\Sigma_1(\omega_1)$-formula $\Phi(v_0,v_1,\omega_1)$ {such that for every} $\gamma < \omega_2$,

\[ L[\vec{S} \ast G] \models \forall \beta < \omega_1 (\Phi (\beta, \gamma,\omega_1) \Leftrightarrow \beta \in S_{\gamma}) \]
 
Thus initial segments  of the sequence $\vec{S} $ are uniformly  $\Sigma_1(\omega_1)$-definable over $L[\vec{S} \ast G]$.
\end{lemma}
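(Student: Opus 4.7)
The plan is to read off the definability of $\vec{S}$ directly from the coding scheme, using Lemma~\ref{computationofRbetas}. From the definition of $\forceQ^1$ together with the fact, to be established elsewhere in the paper via a preservation argument, that $\forceQ^1$ keeps the unused $R_\delta$'s stationary, one has the equivalence
\[
\beta\in S_\gamma \iff R_{\omega_1\cdot\gamma+2\beta}\text{ is non-stationary in }L[\vec{S}\ast G].
\]
So it suffices to produce a $\Sigma_1(\omega_1)$-formula expressing the right-hand side uniformly in $\beta$ and $\gamma$.

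By Lemma~\ref{computationofRbetas}, ``$\alpha\in R_\delta$'' has a uniform $\Sigma_1(\omega_1)$-definition $\psi(\alpha,\delta,\omega_1)$ over $L_{\omega_2}$, and since $\forceQ^0\ast\forceQ^1$ preserves $\aleph_1$ and $\aleph_2$, $L_{\omega_2}^{L[\vec{S}\ast G]}=L_{\omega_2}^L$; thus $R_\delta$ as computed in $L[\vec{S}\ast G]$ agrees with its definition in $L$. Moreover, for fixed $\delta<\omega_2$, a witness to $\psi(\alpha,\delta,\omega_1)$ (an initial segment of $L$ containing $r_\delta$ and the diamond sequence) lies in some $L_\xi$ with $\xi<\omega_2$ depending only on $\delta$, not on $\alpha$. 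I would then take $\Phi(\beta,\gamma,\omega_1)$ to be the assertion: \emph{there exists a transitive set $M$ of size $\omega_1$ containing $\omega_1$ and a long enough $L_\xi$, such that $M$ computes $X=\{\alpha<\omega_1:L_\xi\models\psi(\alpha,\omega_1\cdot\gamma+2\beta,\omega_1)\}$ correctly and contains a club $C\subseteq\omega_1$ with $C\cap X=\emptyset$}. All the inner clauses are $\Delta_0$ in $M$ and the ordinal parameters, and ``$\exists M$ of size $\omega_1$'' is $\Sigma_1(\omega_1)$.

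The equivalence is then handled by standard absoluteness. If $\Phi$ holds, the club $C$ supplied by $M$ is a genuine club in $L[\vec{S}\ast G]$ disjoint from $R_{\omega_1\cdot\gamma+2\beta}$, so this set is non-stationary and hence $\beta\in S_\gamma$. Conversely, if $\beta\in S_\gamma$, pick a witnessing club $C\in L[\vec{S}\ast G]$ and form $M$ as the transitive collapse of an elementary submodel of size $\omega_1$ of some large $H(\theta)^{L[\vec{S}\ast G]}$ containing $C$, $\omega_1$, and the relevant $L_\xi$. The ``initial segments are uniformly definable'' addendum follows immediately by restricting $\gamma$ to $\gamma<\gamma_0$, with $\gamma_0$ as an additional parameter.

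The main obstacle is controlling the complexity. The naive formulation ``$\exists C\,(C\text{ is a club in }\omega_1\,\wedge\,\forall\alpha\in C\,\neg\psi(\alpha,\delta,\omega_1))$'' is only $\Sigma_2$, since the inner $\neg\psi$ is $\Pi_1$ and occurs under a bounded universal quantifier. The crucial point, afforded by the fact that the $\Sigma_1$-witness $L_\xi$ for $\psi$ in Lemma~\ref{computationofRbetas} depends only on $\delta$ and not on $\alpha$, is that one can bundle $L_\xi$ together with the club $C$ inside a single existentially quantified object $M$; this turns the inner $\forall\alpha\in C$ into bounded $\Delta_0$-quantification inside $M$, bringing the overall formula down to $\Sigma_1(\omega_1)$ as required.
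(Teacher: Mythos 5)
Your overall route is the same as the paper's: express ``$R_{\omega_1\cdot\gamma+2\beta}$ is non-stationary'' by existentially quantifying over a single $\aleph_1$-sized transitive model that contains both the computation of the relevant $R_\delta$ and the witnessing club, then use upward absoluteness of non-stationarity for one direction and a L\"owenheim--Skolem/reflection argument for the other. You have also correctly isolated the complexity obstacle (the naive formulation is $\Sigma_2$) and the correct remedy (bundling the $\Sigma_1$-witness for $\psi$ and the club into one existentially quantified object).

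The gap is in the clauses ``a long enough $L_\xi$'' and ``$M$ computes $X$ correctly'': these are not first-order conditions on $M$ and $\xi$, so as written they cannot occur inside $\Phi$; and if you simply delete them, the formula becomes unsound. Since $\psi$ is $\Sigma_1$, a too-short $L_\xi$ computes only a (possibly empty) subset of $R_{\omega_1\cdot\gamma+2\beta}$ --- for instance $L_{\omega_1+1}$ gives $X=\emptyset$ --- and then any club witnesses $C\cap X=\emptyset$, so $\Phi(\beta,\gamma,\omega_1)$ would hold for every $\beta$ and $\gamma$. The entire content of the lemma is to replace ``long enough / correctly'' by an internally checkable condition. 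The paper's device is to demand that the quantified model be a transitive model of $\ZFP$ containing $\omega_1$ and the relevant ordinals; Lemma~\ref{computationofRbetas} then guarantees that any such model computes the $R_\delta$'s fully correctly, after which your absoluteness argument goes through. (The paper moreover has the model assert the full pattern ``one of each pair $R_{2\beta},R_{2\beta+1}$ is non-stationary'' for a whole initial segment, recovering $\vec S\upharpoonright\gamma$ in one stroke; your bit-by-bit variant is fine once the correctness issue is repaired, granted the preservation fact you cite that the unused $R_\delta$'s remain stationary.)
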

\begin{proof}
Let $\gamma < \omega_2$.
We claim that already $\aleph_1$-sized, transitive models of $\ZFP$ which contain a club through the complement of exactly one element of every pair $\{(R_{\alpha}, R_{\alpha+1}) \, : \, \alpha < \omega_1 \cdot \gamma\}$ are sufficient to compute correctly $\vec{S} \upharpoonright \gamma$ via the following $\Sigma_1(\gamma, \omega_1)$-formula: 

\begin{align*}
\Psi(X,\gamma, \omega_1) \equiv  \exists M (&M \text{ transitive } \land M \models \ZFP \land \omega_1,\gamma \in M \land
\\& M \models \forall \beta< \omega_1 \cdot \gamma (\text{either  $R_{2\beta}$ or $R_{2\beta+1}$ is nonstationary) } \land \\& 
M \models X \text{ is an $\omega_1 \cdot \gamma$-sequence $(X_{\alpha})_{\alpha < \omega_1 \cdot \gamma}$ of subsets of $\omega_1$} \land\\&
M \models  \forall \alpha, \delta (\delta \in X_{\alpha} \Leftrightarrow R_{\omega_1 \cdot \alpha + 2 \cdot \delta} \text{ is not stationary and} \\& \qquad \qquad \quad \,
\delta \notin X_{\alpha} \Leftrightarrow  R_{\omega_1 \cdot \alpha + 2 \cdot \delta +1} \text{ is not stationary})
\end{align*}
We want to show that 
\begin{align*}
X=\vec{S} \upharpoonright \gamma \text{ if and only if } \Psi(X,\gamma,\omega_1) \text{ is true in }L[\vec{S} \ast G].
\end{align*}

For the backwards direction, we assume that $M$ is a model and $X \in M$ is a set, as on the right hand side of the above. We shall show that indeed $X=\vec{S}\upharpoonright \gamma$.  As $M$ is transitive and a model of $\ZFP$ it will compute every $R_{\beta}$, $\beta < \omega_1 \cdot \gamma$ correctly by Lemma \ref{computationofRbetas}. As being nonstationary is a $\Sigma_1(\omega_1)$-statement, and hence upwards absolute, we conclude that if $M$ believes to see a pattern written into (its versions of) the $R_{\beta}$'s, this pattern is exactly the same as is seen by the real world $L[\vec{S} \ast G]$. But we know already that in $L[\vec{S} \ast G]$, the sequence $\vec{S}$ is written into the $R_{\beta}$'s, thus $X=\vec{S} \upharpoonright \gamma$ follows.

On the other hand, if $X=\vec{S} \upharpoonright \gamma$, then
\begin{align*}
L[\vec{S} \ast G] \models \forall \beta< \omega_1 \cdot \gamma (\text{either  $R_{2\beta}$ or $R_{2\beta+1}$ is nonstationary) } \\ 
L[\vec{S} \ast G]  \models X \text{ is an $\omega_1 \cdot \gamma$-sequence $(X_{\alpha})_{\alpha < \omega_1 \cdot \gamma}$ of subsets of $\omega_1$}
\end{align*}
and
\begin{align*}
L[\vec{S} \ast G] \models  \forall \alpha, \delta < \omega_1 (\delta  \in X_{\alpha} \Leftrightarrow & R_{\omega_1 \cdot \alpha + 2 \cdot \delta } \text{ is not stationary and} \\ 
\delta  \notin X_{\alpha} \Leftrightarrow&  R_{\omega_1 \cdot \alpha + 2 \cdot \delta +1} \text{ is not stationary})
\end{align*}
By reflection, there is an $\aleph_1$-sized, transitive model $M$ which models the assertions above, which gives the direction from left to right.

\end{proof}

Let us set \[W:= L[\forceQ^0\ast \forceQ^1 ]\] which will serve as our ground model for a second iteration of length $\omega_2$. We shall need the following
well-known result (see \cite{Abraham}, Theorem 2.10 {pp. 20}.)
\begin{fact}
Let $R\subset \omega_1$ be stationary and co-stationary.
Assume $\CH$ and let $(\forceP_{\alpha} \, : \, \alpha \le \delta)$ be a
countable support iteration of $R$-proper posets such
that for every $\alpha \le \delta$
\[ \forceP_{\alpha} \Vdash |\forceP(\alpha)| = \aleph_1. \]
Then $\forceP_{\delta}$ satisfies the $\aleph_2$-cc.
\end{fact}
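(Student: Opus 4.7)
The plan is to argue by induction on $\delta$. As a preliminary I would verify via $\CH$ and countable support that $|\mathbb{P}_\alpha|\leq\aleph_2$ at every stage $\alpha\leq\delta$, iterating the count $\aleph_1^{\aleph_0}=\aleph_1$ for the countable-support names that arise at each successor step. Then I would suppose toward contradiction that $A=\{p_\xi \, : \, \xi<\omega_2\}$ is an antichain in $\mathbb{P}_\delta$ and fix some extension $\hat A\supseteq A$ to a maximal antichain.

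The heart of the argument is an elementary submodel reflection. Choose a sufficiently large regular $\theta$ and an $M\prec H(\theta)$ of size $\aleph_1$ containing $\mathbb{P}_\delta$, $\hat A$, $R$ and all the relevant iteration data, arranged so that $M\cap\omega_1=\delta_0\in R$ (possible by stationarity of $R$). Since $|M|=\aleph_1<|A|$, pick $\xi$ with $p_\xi\notin M$. The key step is to produce a condition $q\leq p_\xi$ which is $(M,\mathbb{P}_\delta)$-generic; this I would do by filtering $M$ by a continuous $\subseteq$-increasing chain $(M_\eta)_{\eta<\omega_1}$ of countable elementary submodels with $M_\eta\cap\omega_1\in R$ stationarily often, and iteratively applying $R$-properness along the chain to amalgamate the stepwise master conditions in a fusion-style construction. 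Once $q$ is in hand, its $(M,\mathbb{P}_\delta)$-genericity forces the generic filter through $q$ to meet $\hat A\cap M$, so $q$ is compatible with some $p\in \hat A\cap M$. But any $p\in \hat A\setminus A$ is by construction incompatible with every element of $A$, in particular with $p_\xi\geq q$; hence $p$ must already lie in $A\cap M$. Since $p_\xi\notin M$ we get $p\neq p_\xi$, contradicting that $A$ is an antichain.

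The main obstacle is producing the master condition $q$ over an uncountable model $M$: $R$-properness only provides the tool for countable submodels directly. Bridging to $|M|=\aleph_1$ requires a careful continuous filtration and fusion of conditions along $\omega_1$, with $\CH$ doing the essential bookkeeping so that at each stage the relevant names fit inside the current countable submodel and the limit stages can be handled. This is the technical heart of Shelah's preservation theorems for countable-support proper iterations, and the $R$-proper variant proceeds analogously, using stationarity of $R$ in $\omega_1$ to ensure that stepwise master conditions exist cofinally often in the chain filtrating $M$.
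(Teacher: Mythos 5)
The paper itself does not prove this Fact; it cites it as Theorem 2.10 of Abraham's Handbook chapter. Measured against that standard argument, your proposal has a genuine gap at its central step: the production of an $(M,\forceP_\delta)$-generic condition $q\le p_\xi$ for an elementary submodel $M$ of size $\aleph_1$. $R$-properness only supplies master conditions over \emph{countable} models $N$ with $N\cap\omega_1\in R$; for a model of size $\aleph_1$ containing a maximal antichain $\hat A$, the assertion that every condition has an $(M,\forceP_\delta)$-generic extension says precisely that $\hat A\cap M$ is predense, i.e.\ that $\hat A=\hat A\cap M$ has size $\le\aleph_1$ --- which is the $\aleph_2$-cc itself. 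So the step you defer to a ``fusion-style construction'' is essentially the theorem. And that construction cannot be carried out in a countable support iteration: a decreasing $\omega_1$-chain of stepwise master conditions along a filtration $(M_\eta)_{\eta<\omega_1}$ need not have a lower bound (properness gives no closure under sequences of length $\omega_1$; think of iterated Sacks forcing), and any lower bound would need support containing the union of $\omega_1$-many countable supports, which can be uncountable and hence is not a legal condition. A further sign of trouble: for $|M|=\aleph_1$ one normally has $\omega_1\subseteq M$, so ``$M\cap\omega_1=\delta_0\in R$'' is not a coherent requirement, and it is anyway not the form in which stationarity of $R$ enters.

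The actual proof runs through countable models and an isomorphism-of-hulls argument, and this is where the hypothesis $\Vdash|\forceP(\alpha)|=\aleph_1$ (not just the cardinality bound $|\forceP_\delta|\le\aleph_2$) does its work. Given $\aleph_2$-many conditions $p_i$, attach to each a countable $N_i\prec H(\theta)$ with $p_i,\forceP_\delta\in N_i$ and $N_i\cap\omega_1\in R$. Because each iterand is forced to have size $\aleph_1$, the relevant names can be taken inside $H(\omega_2)$, and under $\CH$ there are only $\aleph_1$-many transitive collapses of structures $(N_i,\in,p_i,\dots)$; a $\Delta$-system argument then yields $i\ne j$ with an isomorphism $h:N_i\to N_j$ fixing $N_i\cap N_j$, sending $p_i$ to $p_j$, with $N_i\cap\omega_1=N_j\cap\omega_1\in R$. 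One then proves by induction on the length of the iteration that such a pair is compatible, building a common extension that is simultaneously $(N_i,\forceP_\delta)$- and $(N_j,\forceP_\delta)$-generic; this is where $R$-properness is applied, and only to countable models. Your preliminary computation that $|\forceP_\alpha|\le\aleph_2$ is correct but is not the load-bearing use of the hypotheses.
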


\begin{lemma}
$W$ is an $\omega$-distributive generic extension of $L$ which also satisfies $\aleph_2^L=\aleph_2^W$.
\end{lemma}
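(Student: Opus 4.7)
The plan is to analyze the two stages of the iteration separately, exploiting that $\forceQ^0$ is $\sigma$-closed and that each factor of $\forceQ^1$ is an $R$-proper club-shooting forcing.

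\textbf{Step 1 (the $\forceQ^0$-part).} Jech's forcing $\forceP_J$ is $\sigma$-closed, so the countable support product $\forceQ^0 = \prod_{\beta < \omega_2} \forceP_J$ is again $\sigma$-closed and hence $\omega$-distributive over $L$. For $\aleph_2$-preservation I would work in $L$, where $\CH$ holds, and apply a standard $\Delta$-system argument: any would-be antichain of size $\aleph_2$ in $\forceQ^0$ has supports forming, after thinning, a $\Delta$-system with countable root; under $\CH$ there are only $\aleph_1$ possible restrictions of a condition to the root, so two conditions in the antichain agree on the root and are therefore compatible (since $\sigma$-closedness lets us meet them coordinatewise off the root). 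This gives the $\aleph_2$-cc for $\forceQ^0$ and hence preservation of $\aleph_2^L$.

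\textbf{Step 2 (the $\forceQ^1$-part).} I would then pass to $L[\forceQ^0]$, which still satisfies $\CH$ because $\forceQ^0$ added no reals and was $\aleph_2$-cc. Each atomic factor $\forceR_{\alpha}(\gamma) = \dot{\forceP}_{\omega_1 \setminus R_{\beta}}$ is the standard forcing that shoots a club through $\omega_1 \setminus R_{\beta}$ by countable approximations. Since the fixed stationary set $R$ was chosen disjoint from every $R_{\beta}$ used in the coding, the classical argument gives that $\forceP_{\omega_1 \setminus R_{\beta}}$ is $R$-proper and $\omega$-distributive: working inside a countable $M \prec H(\theta)$ with $M \cap \omega_1 \in R$, an ascending chain of conditions meeting all $M$-dense sets can be closed off at the ordinal $M \cap \omega_1$, which lies in $R$ and hence outside $R_\beta$, so the union is a legal condition and is $(M, \forceP)$-generic. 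This shows $|\forceR_\alpha(\gamma)| = \aleph_1$, hence by induction $|\forceQ^1_\alpha| = \aleph_1$.

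\textbf{Step 3 (iterating and putting it together).} The main point, and the expected technical obstacle, is that $\omega$-distributivity propagates through the countable support iteration $\forceQ^1 = \Asterisk_{\alpha < \omega_2} \forceQ^1_\alpha$ of length $\omega_2$ of (iterations of) $R$-proper, reals-preserving forcings. For this I would invoke the standard preservation framework for countable support iterations of $R$-proper, $\omega^\omega$-bounding forcings (as in Shelah's \emph{Proper and Improper Forcing}, Chapter V, in the form adapted to $R$-properness); applied inductively through the two nested countable support iterations, this gives that no new reals are added and $\omega_1$ is preserved, so $\forceQ^1$ is $\omega$-distributive over $L[\forceQ^0]$. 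Finally, since each $\forceQ^1_\alpha$ has size $\aleph_1$ and $\CH$ holds in $L[\forceQ^0]$, the Fact quoted just above applies and yields that $\forceQ^1$ is $\aleph_2$-cc, so $\aleph_2^{L[\forceQ^0]} = \aleph_2^W$. Combining with Step 1 gives $\aleph_2^L = \aleph_2^W$, and $\omega$-distributivity of $\forceQ^0 \ast \forceQ^1$ over $L$ is immediate from the two-step analysis.
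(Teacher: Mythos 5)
Your Steps 1 and 2 are fine (the paper handles $\aleph_2$-preservation by applying the quoted Fact to the whole $R$-proper iteration rather than via a $\Delta$-system for $\forceQ^0$, but your route works too, since under $\CH$ there are only $\aleph_1$ restrictions of conditions to a countable root). The problem is Step 3. Preservation of $\omega^\omega$-bounding is not the same as, and does not imply, preservation of ``adds no new reals''; and it is simply false in general that a countable support iteration of ($R$-)proper, $\omega$-distributive forcings is $\omega$-distributive --- new reals can appear at limit stages, which is exactly why Shelah's ``no new reals'' theorems require extra hypotheses (completeness systems, $\mathbb{D}$-completeness, or $<\omega_1$-properness plus more) beyond properness and distributivity of the factors. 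So the black box you invoke does not exist in the form you need, and this is the one step of the lemma that actually requires an argument.

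The fix is already contained in your own Step 2: run that closing-off argument for the \emph{entire} iteration $\forceQ^1$ at once rather than factor by factor. This is what the paper does. Given $p \Vdash \dot{r} \in 2^{\omega}$, take a countable $M \prec H(\omega_3)$ with $p, \forceQ^1, \dot{r} \in M$ and $M \cap \omega_1 \in R$, and build a decreasing sequence $(p_n)_{n<\omega}$ of conditions in $M$ with $p_n$ deciding $\dot{r}(n)$, arranged so that for every coordinate $\alpha$ in the (countable) union of the supports, $\sup_n \max(p_n(\alpha))$ converges to $M \cap \omega_1$. Since $M \cap \omega_1 \in R$ and $R$ is disjoint from every $R_\beta$ being avoided, appending the pair $(M\cap\omega_1, M\cap\omega_1)$ to each nontrivial coordinate of $\bigcup_n p_n$ yields a legal condition $q$ forcing $\dot{r} = \check{r}$ for a ground-model real $r$. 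This single fusion handles all limit stages uniformly and is what makes the lemma true; it cannot be replaced by a coordinatewise argument plus a generic preservation theorem.
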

\begin{proof}
The second assertion follows immediately from the last {fact}. The first assertion holds by the following argument which already will look familiar. First note that as $\forceQ^0$ does not add any reals it is sufficient to show that $\forceQ^1$ is $\omega$-distributive.
Let $p \in \forceQ^1$ be a condition and assume that $p \Vdash \dot{r} \in 2^{\omega}$. We shall find a stronger $q < p$ and a real $r$ in the ground model such that $q \Vdash \check{r}=\dot{r}$. Let $M \prec H(\omega_3)$ be a countable elementary submodel which contains $p, \forceQ^1$ and $\dot{r}$ and such that $M \cap \omega_1 \in R$, where $R$ is our fixed stationary set from above. Inside $M$ we recursively construct a decreasing sequence $p_n$ of conditions in $\forceQ^1$, such that for every $n$ in $\omega,$ $p_n \in M$, $p_n$ decides $\dot{r}(n)$ and for every $\alpha$ in the support of $p_n$, the sequence $sup_{n \in \omega} max( p_n(\alpha))$ converges towards $M \cap \omega_1$ which is in $R$. Now, $q':= \bigcup_{n \in \omega} p_n$ and for every $\alpha< \omega_1$ such that $q'(\alpha)\ne 1$ (where 1 is the weakest condition of the forcing),  in other words for every $\alpha$ in the support of $q'$ we define $q(\alpha):= q'(\alpha) \cup \{((M \cap \omega_1), (M \cap \omega_1))\}$ and $q(\alpha)=1$ otherwise. Then $q=(q(\alpha))_{\alpha < \omega_1}$ is a condition in $\forceQ^1$, as can be readily verified and $q \Vdash \dot{r} = \check{r}$, as desired.
\end{proof}
Note that by the last lemma, the second forcing $\forceQ^1$ which is {a} countably supported iteration of the appropriate club shooting forcings is in fact just a countably supported product of its factors, i.e. at every stage of the iteration we can force with the associated $\forceP_R$, as computed in $L$.

Our goal is to use $\vec{S}$ for coding again. For this it is essential that the sequence remains independent in $W$. To see this we shall argue that forcing with $\mathbb{Q}^1$ over $L[\forceQ^0]$ preserves Suslin trees. 
Recall that for a forcing $\forceP$, $\theta$ sufficiently large and regular and $M \prec H(\theta)$, a condition $q \in \forceP$ is $(M,\forceP)$-generic iff for every maximal antichain $A \subset \forceP$, $A \in M$, it is true that $ A \cap M$ is predense below $q$. In the following we will write $T_{\eta}$ to denote the $\eta$-th level of the tree $T$ and $T \upharpoonright \eta$ to denote the set of nodes of $T$ of height $< \eta$.
The key fact is the following (see \cite{Miyamoto2} for the case where $\forceP$ is proper, see also \cite{Ho1} for a detailed proof){.} 
\begin{lemma}\label{preservation of Suslin trees}
 Let $T$ be a Suslin tree, $R \subset \omega_1$ stationary and $\forceP$ an $R$-proper
 poset. Let $\theta$ be a sufficiently large cardinal.
 Then the following are equivalent:
 \begin{enumerate}
  \item $\Vdash_{\forceP} T$ is Suslin
 
  \item if $M \prec H_{\theta}$ is countable, $\eta = M \cap \omega_1 \in R$, and $\forceP$ and $T$ are in $M$,
  {and also} $p \in \forceP \cap M$, then there is a condition $q<p$ such that 
  for every condition $t \in T_{\eta}$, 
  $(q,t)$ is $(M, \forceP \times T)$-generic.
 \end{enumerate}

\end{lemma}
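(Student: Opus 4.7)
The plan is to prove each implication separately, using the standard correspondence between a product condition $(q,t)\in\forceP\times T$ and the two-stage generic ``force with $\forceP$, then extend the branch of $T$ through $t$''. The $R$-properness hypothesis enters only in $(1)\Rightarrow(2)$, where it supplies the initial $(M,\forceP)$-generic condition; the other direction is a pure genericity argument.

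For $(2)\Rightarrow(1)$, I would fix a $\forceP$-name $\dot A$ for a maximal antichain of $T$ and a condition $p\in\forceP$, choose a countable $M\prec H_\theta$ containing $p,\forceP,T,\dot A$ with $\eta:=M\cap\omega_1\in R$, and apply (2) to obtain $q\leq p$ such that $(q,t)$ is $(M,\forceP\times T)$-generic for every $t\in T_\eta$. The goal is to show $q\Vdash\dot A\subseteq T\upharpoonright\eta$, which forces $\dot A$ to be countable. If not, some $q'\leq q$ and $s\in T$ with $\mathrm{ht}(s)\geq\eta$ satisfy $q'\Vdash s\in\dot A$; let $t\in T_\eta$ be the level-$\eta$ ancestor of $s$. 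The $\forceP\times T$-name $\dot u$ for the unique element of $\dot A\cap\dot G_T$ (well-defined since $\dot A$ is forced to be a maximal antichain and $\dot G_T$ is a branch) lies in $M$, as does its height, a name for an ordinal below $\omega_1$. By $(M,\forceP\times T)$-genericity, $(q,t)\Vdash\mathrm{ht}(\dot u)\in M\cap\omega_1=\eta$, while $(q',s)\leq(q,t)$ forces $\dot u=s$ and hence $\mathrm{ht}(\dot u)\geq\eta$, a contradiction.

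For $(1)\Rightarrow(2)$, I would first invoke $R$-properness of $\forceP$ (using $\eta\in R$) to obtain $q\leq p$ that is $(M,\forceP)$-generic, and then show any $t\in T_\eta$ makes $(q,t)$ an $(M,\forceP\times T)$-generic condition. Given a maximal antichain $A\in M$ of $\forceP\times T$, form the $\forceP$-name $\dot X$ with $\dot X^G:=\{s\in T:\exists r\in G\,((r,s)\in A)\}$; routine checking shows $\dot X\in M$ is forced to be a maximal antichain of $T$. By (1) it is forced countable, and fixing an enumeration name $\dot f:\omega\to\dot X$ in $M$, $(M,\forceP)$-genericity applied to each $\mathrm{ht}(\dot f(n))$ (a name in $M$ for a countable ordinal) forces $\dot f(n)$ to live at a level $\beta\in M\cap\eta$; since $T_\beta\in M$ is countable, $T_\beta\subseteq M$, whence $q\Vdash\dot X\subseteq T\cap M$. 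Given $(q'',t'')\leq(q,t)$, maximality of $\dot X$ together with height considerations produces, after extending $q''$ through $(M,\forceP)$-genericity to decide a specific value, some $s\in T\cap M$ with $q''\Vdash s\in\dot X$ and $s\leq_T t''$. Extending further to meet $A_s:=\{r:(r,s)\in A\}\in M$ inside $A_s\cap M$ (by enlarging $A_s$ to a maximal antichain of $\forceP$ inside $M$ and reapplying $(M,\forceP)$-genericity) delivers $(r,s)\in A\cap M$ compatible with $(q'',t'')$.

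The main technical point is the ``absorption'' of elements of $\dot X$ into $M$: a priori an element of $T\upharpoonright\eta$ need not belong to $M$, since $M\cap\omega_1=\eta$ only controls the supremum of countable ordinals in $M$. The remedy is to argue via heights: $(M,\forceP)$-genericity applied to the $M$-name $\mathrm{ht}(\dot f(n))$ for an ordinal below $\omega_1$ forces the height to lie in $M$, and countability of the corresponding level $T_\beta\in M$ then places all of $T_\beta$, and hence the value $\dot f(n)^G$, inside $M$.
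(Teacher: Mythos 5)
The paper does not prove this lemma; it is quoted as a known fact with the proof deferred to Miyamoto's article and to \cite{Ho1}. Your argument is correct and is essentially the standard proof given there: the direction $(2)\Rightarrow(1)$ via the name $\dot u$ for the unique point of $\dot A\cap\dot G_T$ and the fact that generic conditions trap ordinal names below $M\cap\omega_1$, and the direction $(1)\Rightarrow(2)$ via the induced maximal antichain $\dot X$ of $T$, its absorption into $T\upharpoonright\eta=T\cap M$ through the countable levels $T_\beta\in M$, and the final antichain-extension step to pull the $\forceP$-coordinate into $M$, are exactly the ingredients of the cited proof.
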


{With a routine adjustment one can show that Theorem 1.3 of} \cite{Miyamoto2}{ holds true if one replaces proper by $R$-proper for $R \subset \omega_1$ a stationary subset, i.e. that a countable support iteration of $R$-proper forcings which preserve Suslin trees results in a forcing which preserves Suslin trees.
\begin{lemma}\label{omegadistributive}
Let $R \subset \omega_1$ be stationary, co-stationary, then the club shooting forcing $\forceP_R$ preserves Suslin trees.
\end{lemma}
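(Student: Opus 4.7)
The plan is to verify condition (ii) of Lemma \ref{preservation of Suslin trees} directly for $\forceP_R$. So fix sufficiently large $\theta$, a countable $M \prec H_\theta$ with $R, \forceP_R, T \in M$ and $\eta := M \cap \omega_1 \in R$, and $p \in \forceP_R \cap M$. I will construct $q \leq p$ in $\forceP_R$ such that $(q,t)$ is $(M, \forceP_R \times T)$-generic for every $t \in T_\eta$.

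First I would do the bookkeeping. Enumerate the dense open subsets of $\forceP_R \times T$ lying in $M$ as $\{D_n : n \in \omega\}$; since $\eta \subseteq M$, pick an increasing sequence $(\eta_n)_{n \in \omega}$ of ordinals below $\eta$ cofinal in $\eta$. Each level $T_{\eta_n}$ of the Suslin tree $T$ is countable and sits in $M$, hence $T_{\eta_n} \subseteq M$; enumerate it inside $M$ as $\{s^n_k : k \in \omega\}$. Fix also a surjection $i \mapsto (n_i,k_i)$ from $\omega$ onto $\omega^2$.

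Now I build a decreasing chain $p = p_0 \geq p_1 \geq \cdots$ in $\forceP_R \cap M$ with $\max p_i > \eta_i$. Given $p_i$, use density of $D_{n_i}$ and elementarity of $M$ to find $(p',t') \in D_{n_i} \cap M$ with $p' \leq p_i$ in $\forceP_R$ and $t' \leq_T s^{n_i}_{k_i}$; then, since $R$ is unbounded in $\omega_1$ and hence $R \cap M$ is unbounded in $\eta$ by elementarity, pick $\beta \in R \cap M$ with $\beta > \max p'$ and $\beta > \eta_i$, and set $p_{i+1} := p' \cup \{\beta\}$. Define $q := \bigcup_{i \in \omega} p_i \cup \{\eta\}$. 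The sequence $\max p_i$ strictly increases to $\eta$, so the only new limit created in forming $q$ is $\eta$ itself, which lies in $R$; combined with each $p_i \subseteq R$ this makes $q$ a closed bounded subset of $\omega_1$ contained in $R$, i.e., a condition of $\forceP_R$ extending every $p_i$.

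Finally I verify genericity. Fix $t \in T_\eta$ and a dense open $D = D_n \in M$. Then $s := t \upharpoonright \eta_n$ is some $s^n_k$, and taking $i$ with $(n_i,k_i) = (n,k)$ yields $(p_{i+1},t') \in D_n \cap M$ with $q \leq p_{i+1}$ and $t' \leq_T s \leq_T t$, hence $(q,t) \leq (p_{i+1},t')$ in the forcing order of $\forceP_R \times T$. Therefore $(q,t)$ extends an element of $D \cap M$, which is exactly what is needed. The only subtle point in the whole argument is guaranteeing that $q$ is a legitimate condition in $\forceP_R$: the hypothesis $\eta \in R$ takes care of the new limit point at the top, while the trick of inserting $\beta \in R \cap M$ above $\eta_i$ at each stage prevents any other unintended limit points from appearing in $\bigcup_i p_i$.
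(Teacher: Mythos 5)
The paper states this lemma without giving a proof (it defers to the surrounding discussion of Miyamoto's criterion and to the references), so I can only assess your argument on its own terms — and it has a genuine gap at the heart of the matter, namely in the genericity verification. At stage $i$ you apply density of $D_{n_i}$ to the pair $(p_i, s^{n_i}_{k_i})$. What density actually gives you is a pair $(p',t') \in D_{n_i}$ with $p' \le p_i$ and with $t'$ \emph{extending} $s^{n_i}_{k_i}$ in the tree; it does not give you $t'$ equal to, or an initial segment of, $s^{n_i}_{k_i}$ (take $D = \forceP_R \times \{u : \mathrm{ht}(u) > \mathrm{ht}(s^{n_i}_{k_i})\}$ to see that no such pair need exist in $D$). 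But then, in the verification, the final inequality $(q,t) \le (p_{i+1},t')$ requires $t'$ to be an initial segment of $t$. All you know is that $t'$ and $t$ both extend $s = t \upharpoonright \eta_n$; the tree branches above $s$, and $t'$ may well be incomparable with $t$. So the chain ``$t' \le_T s \le_T t$'' is broken no matter how you read $\le_T$: if it denotes the tree order, the construction step cannot be carried out; if it denotes the forcing order on $T$, the verification step fails. This is not a technicality — it is exactly the point where the Suslinness of $T$ must enter beyond the mere countability of its levels, and your argument never uses it.

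Two further remarks on how this is repaired in the standard treatments (Miyamoto's paper and the detailed proofs in the author's earlier work cited here). First, one should not try to show that $(q,t)$ outright \emph{extends} a member of $D \cap M$; the definition of $(M,\forceP_R\times T)$-genericity only demands that $A \cap M$ be \emph{predense below} $(q,t)$ for maximal antichains $A \in M$, and the stronger statement you aim for is in general not achievable. Second, the correct arguments exploit that for any dense open $D$ and $r \in \forceP_R\cap M$ the set $\{s \in T : \exists r' \le r\, (r',s) \in D\}$ is a dense open subset of the Suslin tree lying in $M$, hence contains a maximal antichain $A \in M$ of $T$; since $A$ is countable it is bounded below $\eta$ and every $t \in T_\eta$ passes through exactly one $a \in A$. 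The work then consists in making the witnessing conditions for the various $a \in A$ cohere with the single decreasing $\omega$-sequence in $\forceP_R$ (they are a priori pairwise incompatible), which is where the $\omega$-distributivity of Suslin trees and the ability to steer the suprema of intermediate fusion sequences into $R \cap M$ are used. Your bookkeeping, the construction of $q$ as a legitimate condition (the end-extension chain with top point $\eta \in R$), and the reduction to condition (2) of Lemma \ref{preservation of Suslin trees} are all fine; the missing content is precisely this antichain argument.
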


Invoking the generalized version of Miyamoto's theorem we obtain:
\begin{lemma}
The second forcing $\forceQ^1$ we use to produce $W=L[\forceQ^0 \ast \forceQ^1]$ preserves Suslin trees. As a consequence the $\omega_1$-trees from $\vec{S}$ are Suslin trees in $W$.
\end{lemma}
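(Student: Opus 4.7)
The plan is to combine the single-step preservation result of Lemma \ref{omegadistributive} with the iteration preservation theorem for $R$-proper posets flagged just before it. First, recall that by the $\omega$-distributivity lemma proved above, $\forceQ^1$ is in fact a countably supported product whose factors are of the form $\forceP_{\omega_1 \setminus R_\beta}$, where each $R_\beta$ is a stationary, co-stationary set disjoint from the fixed stationary set $R \subset \omega_1$. Each such factor is a club shooting forcing through a stationary set containing $R$, hence is $R$-proper, and by Lemma \ref{omegadistributive} preserves Suslin trees.

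Next I would invoke the generalized version of Miyamoto's iteration theorem stated just before Lemma \ref{omegadistributive}: a countably supported iteration of $R$-proper, Suslin-tree-preserving forcings is itself Suslin-tree-preserving. Applying this to $\forceQ^1$ yields the first assertion of the lemma. For the ``consequence'' part, note that $\vec{S}$ is added by $\forceQ^0$ as an independent sequence of Suslin trees over $L$; Suslin-tree preservation by $\forceQ^1$ then moves each $S_\alpha$ into $W = L[\forceQ^0 \ast \forceQ^1]$ while keeping it Suslin. Applying the same preservation clause to every finite subproduct $S_{e_0} \times \cdots \times S_{e_n}$ (which is a Suslin tree in $L[\forceQ^0]$ by the independence of $\vec{S}$ there, since products of Jech's forcing give an independent generic sequence) gives the stronger conclusion that $\vec{S}$ remains an independent sequence of Suslin trees in $W$, which is what is actually needed for the subsequent coding arguments.

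The main obstacle is the generalized Miyamoto iteration theorem itself, which the paper takes as a routine adjustment but which does require care. In the classical proper setting one proves by induction on the length $\delta$ of the iteration that the equivalent condition of Lemma \ref{preservation of Suslin trees}(2) propagates through successor and limit stages. The $R$-proper adaptation replaces ``countable $M \prec H(\theta)$'' by ``countable $M \prec H(\theta)$ with $M \cap \omega_1 \in R$'' throughout, and at limit stages of cofinality $\omega$ one must construct, given $p \in \forceP_\alpha \cap M$, a descending sequence $(p_n)_n \subset M$ with supports converging to $\eta = M \cap \omega_1$, whose lower bound $q$ satisfies that $(q,t)$ is $(M, \forceP_\alpha \times T)$-generic uniformly in $t \in T_\eta$; this is the standard simultaneous fusion indexed by $T_\eta$, adapted from Miyamoto's argument. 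I would treat this verification as a black box, exactly as the paper does, and thereby get the lemma.
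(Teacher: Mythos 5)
Your proposal follows exactly the paper's intended argument: the lemma is derived by combining the single-step preservation result for the club shooting forcings $\forceP_{\omega_1\setminus R_\beta}$ (each of which is $R$-proper since $R$ is disjoint from every $R_\beta$) with the $R$-proper generalization of Miyamoto's iteration theorem, which the paper likewise treats as a routine adaptation and invokes without proof. Your added remark that applying preservation to finite subproducts keeps $\vec{S}$ \emph{independent} in $W$ is a correct and useful elaboration of what the paper uses implicitly, but it does not change the route.
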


We end with a straightforward lemma which is used later in coding arguments.

\begin{lemma}\label{a.d.coding preserves Suslin trees}
 Let $T$ be a Suslin tree and let $\mathbb{A}_D(X)$ be the almost disjoint coding which codes
 a subset $X$ of $\omega_1$ into a real with the help of an almost disjoint family
 of reals $D$ of size $\aleph_1$. Then $$\mathbb{A}_{D}(X) \Vdash_{} T \text{ is Suslin }$$
 holds.
\end{lemma}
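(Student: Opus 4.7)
The plan is to reduce the claim to showing that the product forcing $\mathbb{A}_D(X) \times T$ satisfies the ccc. This is a standard reduction: if $\dot A$ were an $\mathbb{A}_D(X)$-name for an uncountable antichain $\{t_\alpha : \alpha < \omega_1\}$ of $T$ in the extension, one picks for each $\alpha$ a condition $p_\alpha \in \mathbb{A}_D(X)$ forcing $t_\alpha \in \dot A$, and checks that the $(p_\alpha, t_\alpha)$ form an uncountable antichain in $\mathbb{A}_D(X) \times T$. Indeed, any common extension $(p,t) \leq (p_\alpha, t_\alpha),(p_\beta,t_\beta)$ would force $t_\alpha$ and $t_\beta$ to be compatible tree-nodes both below $t$, contradicting antichain-ness. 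So ccc of the product will do the job, since $T$ is automatically still an $\omega_1$-tree in a ccc extension.

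To verify the product is ccc, I would take an arbitrary uncountable family of product conditions
\[ \{((r_\alpha, R_\alpha), t_\alpha) : \alpha < \omega_1\}, \]
and first apply the $\Delta$-system lemma to the finite sets $r_\alpha \in [\omega]^{<\omega}$. Since the $r_\alpha$ live in a countable set, one can in fact thin down to an uncountable $I \subset \omega_1$ on which $r_\alpha$ is constantly equal to some fixed finite set $r \subset \omega$.

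The key observation is that any two conditions of the almost disjoint coding with the same finite first coordinate are compatible: inspecting Definition \ref{definitionadcoding}, the condition $(r, R_\alpha \cup R_\beta)$ extends both $(r, R_\alpha)$ and $(r, R_\beta)$, because the side-condition clause ``$r \cap d_{\gamma} = s \cap d_{\gamma}$'' is trivially satisfied when $s = r$. Hence for all $\alpha, \beta \in I$ the almost disjoint coding components are compatible, and compatibility in the product reduces entirely to compatibility of $t_\alpha$ and $t_\beta$ in $T$.

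Since $T$ is a Suslin tree it is in particular ccc, so the uncountable family $\{t_\alpha : \alpha \in I\}$ contains two compatible nodes $t_\alpha, t_\beta$. Then $((r,R_\alpha \cup R_\beta), t)$, where $t$ is any common extension of $t_\alpha$ and $t_\beta$ in $T$, witnesses compatibility of the corresponding product conditions, and the ccc of $\mathbb{A}_D(X) \times T$ follows. There is no real obstacle here; the only point demanding care is the elementary verification that same-$r$ conditions in the almost disjoint coding are always compatible, which is immediate from the definition of the order.
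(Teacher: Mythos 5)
Your proof is correct and follows essentially the same route as the paper: the paper simply cites that $\mathbb{A}_D(X)$ has the Knaster property so that $\mathbb{A}_D(X)\times T$ is ccc, while your observation that conditions with the same first coordinate are compatible is exactly the (standard) verification of that fact, indeed of $\sigma$-centeredness. The extra details you supply (the $\Delta$-system/thinning step and the reduction from ``product is ccc'' to ``$T$ stays Suslin'') are correct and just make explicit what the paper leaves implicit.
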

\begin{proof}
 This is clear as $\mathbb{A}_{D}(X)$ has the Knaster property, thus the product $\mathbb{A}_{D}(X) \times T$ is ccc and $T$ must be Suslin in $V[{\mathbb{A}_{D}(X)}]$. 
\end{proof}

\section{Coding device}
The following is taken entirely from \cite{Ho7}, which itself is a simplification of the coding machinery first constructed in \cite{Ho2} and \cite{Ho4}. We continue with the construction of the appropriate notions of forcing which we want to use in our proof.

The objective is to first define a coding forcing \(\operatorname{Code}(x,\eta, i)\) for real numbers \( x \) and ordinals $\eta < \omega_2$ and $i \in 3$, which will force a certain \(\Sigma^1_3\)-formula \(\Phi(x,i)\) to hold in the resulting generic extension. The forcing will be a two step iteration $\forceQ_0 \ast \forceQ_1$ an will be defined uniformly for all four $\omega_2$-sequences of Suslin trees $\vec{S}^0,\vec{S}^1,\vec{S}^2$ and $\vec{S}^3$. In order to reduce notation we will define it just for $i=1$.

In the first step, we pick our given real $x \subset \omega$ and eliminate all elements of \(\vec{S}^1\) starting at the $\eta$-th $\omega$-block of \( \vec{S}^1 \), by generically adding an \(\omega_1\)-branch according  $x\subset \omega$. Specifically, we first form \(E_{x,\eta}:= \{ \omega \cdot \eta +2n \mid n \in \omega, n \in x \} \) and
\( F_{x,\eta} := \{ \omega \cdot \eta+2n+2 \mid n \in \omega, n \notin x \} \). Then we force with

 \[\forceQ_0:= \prod_{\alpha  \in E_{x,\eta} \cup F_{x,\eta}} S_{\alpha}\] with finite support with ground model \(W\). This is an \(\aleph_1\)-sized, ccc forcing over \(W\), so in the generic extension, \(\aleph_1\) is preserved. We let $g$ be a $\forceQ_0$-generic filter and work in $W[g]$.

Let \(X \subset \omega_1\) be the least set (in some fixed well-order of \(H(\omega_2)^W[g]\)) that encodes the following objects:
\begin{itemize}

\item  The $<$-least set of $\omega_1 \cdot \omega \cdot \omega_1$-many club subsets through $\vec{R}$, our $\Sigma_1 (\{\omega_1\})$-definable sequence of $L$-stationary subsets of $\omega_1$, which are necessary to compute every tree $S_{\beta} \in \vec{S}^1$ in the interval $[ \omega \cdot \eta, \omega\cdot \eta + \omega)$, using the $\Sigma_1 (\{\omega_1\})$-formula from the previous section.

\item The least set of \(\omega_1\)-branches in \(W\) through elements of \(\vec{S}^1\) that code \( x\) at the \(\omega\)-block of trees in $\vec{S}^1$ starting at  \( \eta \). Specifically, we collect:
\[
\{ b_{\beta} \subset S^1_{\beta} : \beta \in E_{x,\eta} \cup F_{x,\eta} \}
\]

\end{itemize}

In \(L[X]\),  we can decode \(x \) by examining the \(\omega\)-block of \(\vec{S}^1\)-trees starting at \(\eta\) and determining which tree has an \(\omega_1\)-branch in \(L[X]\):
\begin{itemize}
 \item[$(\ast)$]  $n \in x$ if and only if $S^1_{\omega \cdot \eta +2n+1}$ has an $\omega_1$-branch, and $n \notin x$ if and only if $S^1_{\omega \cdot \eta +2n}$ has an $\omega_1$-branch.
\end{itemize}
Indeed, if \(n \notin x\), we added a branch through \(S^1_{\omega \eta + 2n}\). If \(S^1_{\omega \eta + 2n}\) is Suslin in \(L[X]\), then we must have added an \(\omega_1\)-branch through \(S^1_{\omega \eta + 2n+1}\), since we always add an \(\omega_1\)-branch through either \(S^1_{\omega \eta + 2n+1}\) or \(S^1_{\omega \eta + 2n}\). Adding branches through some \(S^1_{\alpha}\)'s will not affect whether some \(S^1_{\beta}\) is Suslin in \(L[X]\), as \(\vec{S}^1\) is independent.

We can now apply an argument similar to David’s trick (see \cite{David} or \cite{Friedman}). We rewrite the information of \(X \subset \omega_1\) as a set \(Y \subset \omega_1\). Any transitive, \(\aleph_1\)-sized model \(M\) of \(\ZFP\) containing \(X\) will be able to decode all the information from \(X\). Therefore, if we code the model \((M, \in)\) containing \(X\) as a set \(X_M \subset \omega_1\), then for any uncountable \(\beta\) such that \(L_{\beta}[X_M] \models \ZFP\) and \(X_M \in L_{\beta}[X_M]\), we have:
\[
L_{\beta}[X_M] \models \text{``The model decoded from } X_M \text{ satisfies } (\ast)."
\]
In particular, there will be an \(\aleph_1\)-sized ordinal \(\beta\), and we can fix a club \(C \subset \omega_1\) and a sequence \((M_{\alpha} : \alpha \in C)\) of countable elementary submodels of \(L_{\beta}[X_M]\) such that:
\[
\forall \alpha \in C \, (M_{\alpha} \prec L_{\beta}[X_M] \land M_{\alpha} \cap \omega_1 = \alpha).
\]
We now define \(Y \subset \omega_1\) to code the pair \((C, X_M)\), where the odd entries of \(Y\) code \(X_M\), and \(E(Y)\) denotes the set of even entries of \(Y\). Let \(\{c_{\alpha} : \alpha < \omega_1\}\) be the enumeration of \(C\), and we define the following conditions:
\begin{itemize}
\item[-] \(E(Y) \cap \omega\) codes a well-ordering of type \(c_0\).
\item[-] \(E(Y) \cap [\omega, c_0) = \emptyset\).
\item[-] For each \(\beta\), \(E(Y) \cap [c_{\beta}, c_{\beta} + \omega)\) codes a well-ordering of type \(c_{\beta+1}\).
\item[-]For each \(\beta\), \(E(Y) \cap [c_{\beta} + \omega, c_{\beta+1}) = \emptyset\).
\end{itemize}

Finally, we obtain:
\begin{itemize}
\item[$({\ast}{\ast})$] For any countable transitive model $M$ of $\ZFP$ such that $\omega_1^M=(\omega_1^L)^M$ and $ Y \cap \omega_1^M \in M$, $M$ can construct its version of the universe $L[Y \cap \omega_1^N]$, and the latter will see that there is an $\aleph_1^M$-sized transitive model $N \in L[Y \cap \omega_1^N]$ which models $(\ast)$ for $x$ at some $\omega$-block of $\vec{S}^1$-trees.
\end{itemize}
This establishes a local version of the property \((\ast)\).

In the next step, we use almost disjoint forcing \(\mathbb{A}_D(Y)\) relative to to some canonically defined almost disjoint family of reals \(D \in L\) to code the set \(Y\) into one real \(r\). This forcing depends only on the subset of \(\omega_1\) being coded, so \(\mathbb{A}_D(Y)\) will be independent of the surrounding universe, as long as it has the correct \(\omega_1\) and contains \(Y\).

We finally obtain a real \(r\) such that 
\begin{itemize}
\item[$({\ast}{\ast}{\ast})$] For any countable, transitive model $M$ of $\ZFP$ such that $\omega_1^M=(\omega_1^L)^M$ and $ r  \in M$, $M$ can construct its version of $L[r]$ which in turn thinks that there is a transitive $\ZFP$-model $N$ of size $\aleph_1^M$  such that $N$ believes $(\ast)$ for $x$.
\end{itemize}
Note that $({\ast} {\ast} {\ast})$ is a $\Pi^1_2$-formula in the parameters $r$ and $x$. We will often suppress the reals $r,x$ when referring to $({\ast} {\ast} {\ast})$ as they will be clear from the context. We say in the above situation that the real $x$ \emph{ is written into $\vec{S}^1$} at some $\eta$, or that $x$ \emph{is coded into} $\vec{S^1}$ (at $\eta$) and $r$ witnesses that $x$ is coded. Likewise a forcing $\operatorname{Code} (x,i)$ is defined for coding the real $x$ into $\vec{S^i}$.

 The projective and local statement $({\ast} { \ast} {\ast} )$, if true,  will determine how certain inner models of the surrounding universe will look like with respect to branches through $\vec{S}^i$.
That is to say, if we assume that $({\ast} { \ast} {\ast} )$ holds for a real $x$ and is the truth of it is witnessed by a real $r$. Then $r$ also witnesses the truth of $({\ast} { \ast} {\ast} )$ for any transitive $\ZFP$-model $M$ which contains $r$ (i.e. we can drop the assumption on the countability of $M$).
Indeed if we assume 
that there would be an uncountable, transitive $M$, $r \in M$, which witnesses that $({\ast} { \ast} {\ast} )$ is false. Then by L\"owenheim-Skolem, there would be a countable $N\prec M$, $r\in N$ which we can transitively collapse to obtain the transitive $\bar{N}$. But $\bar{N}$ would witness that $({\ast} { \ast} {\ast} )$ is not true for every countable, transitive model, which is a contradiction.

Consequently, the real $r$ carries enough information that
the universe $L[r]$ will see that certain trees from $\vec{S}^i$ have branches in that
\begin{align*}
n \in x \Rightarrow L[r] \models  ``S^i_{\omega \eta + 2n+1} \text{ has an $\omega_1$-branch}".
\end{align*}
and
\begin{align*}
n \notin x\Rightarrow L[r] \models ``S^i_{\omega \eta + 2n} \text{ has an $\omega_1$-branch}".
\end{align*}
Indeed, the universe $L[r]$ will see that there is a transitive $\ZFP$-model $N$ which believes $(\ast)$, the latter being coded into $r$. But by upwards $\Sigma_1$-absoluteness, and the fact that $N$ can compute $\vec{S}^i$ correctly, if $N$ thinks that some tree in $\vec{S^1}$ has a branch, then $L[r]$ must think so as well.

\section{Proof of the theorem}

\subsection{Definition of the sets which eventually can not be separated}
We shall describe the ideas behind the construction. We use the independent Suslin trees $\vec{S}^0$ and $\vec{S}^1$ to define two disjoint $\Pi^1_3$-sets $D^0$, $D^1$ which can not be separated by a pair of disjoint $\Sigma^1_3$-sets $A_m, A_k$. The trees from $\vec{S}^2$ and $\vec{S}^3$ will be utilized to define two disjoint $\Sigma^1_3$-sets $E^2,E^3$ which can not be separated by a pair of disjoint $\Pi^1_3$-sets $B_m, B_k$. 
We let 
\[D^0:= \{ x \mid x \text{ is not coded into $\vec{S}^0$\}} \]
and
\[ D^1:= \{ x \mid x \text{ is not coded into $\vec{S}^1$ \}.} \]
Note that as being coded is a $\Sigma^1_3$ property, not being coded is $\Pi^1_3$ and so are $D^0$ and $D^1$.

Likewise we let
\[E^2:= \{ x \mid x \text{ is coded into $\vec{S}^2$ } \} \]
and
\[E^3:= \{ x  \mid x \text{ is coded into $\vec{S}^3$}\}. \]
which are two $\Sigma^1_3$-sets.

The forcings we intend to use belong to a certain specific set of forcings which we dub suitable and which shall be defined now:

\subsection{Suitable forcings}

\begin{definition}
Let $\delta< \omega_2$ and $F: \delta \rightarrow H(\omega_2)$ and $E: H(\omega_2) \rightarrow \{2,3\}$ be two functions. A finite support iteration of length $\delta$, $(\forceP_{\beta} ,\dot{\forceQ}_{\beta}\mid \beta \le \delta)$ is called suitable (with respect to $F$ and $E$ if it obeys the following rules. Suppose we arrived at stage $\beta$ of our iteration and we obtained already $\forceP_{\beta}$. Let $G_{\beta}$ be a $\forceP_{\beta}$-generic filter and assume that $F(\beta)= (\dot{x},m,k,i, \dot{\eta})$, where $\dot{x}$ is a $\forceP_{\beta}$-name of a real, $\dot{\eta}$ is a name of an ordinal and $m,k,i$ are natural numbers.
\begin{enumerate}
\item If $m,k$ are G\"odelnumbers of two $\Sigma^1_3$-formulas and $i \in 2$ then
we let $\dot{\forceQ}_{\beta}:= \operatorname{Code} (x,\eta,i)$, provided the $\eta$-th $\omega$-block of Suslin trees from $\vec{S}^1$ has not been used already for coding. Otherwise we force with $\operatorname{Code} (x,\eta',i)$, where $\omega \eta'$ is the least $\omega$-block of trees from $\vec{S}^i$ which has not been used for coding.
\item If $m,k$ are G\"odelnumbers of two $\Pi^1_3$-formulas and $i \in \{2,3\}$ then we let $x= \dot{x}^{G_{\beta}}$ and let $\tau$ be the $<$-least forcing name of $x$, where $<$ is some previously fixed wellorder. We calculate the value of $E(\tau) \in \{2,3\}$ and force with $\dot{\forceQ}_{\beta}:= \operatorname{Code} (x, \eta, E(\tau))$, provided the $\eta$-th $\omega$-block has not been used already for coding forcings. If not we use $\dot{\forceQ}_{\beta}:= \operatorname{Code} (x, \eta', E(\tau))$ where $\eta'$ is again the least ordinal such that its $\omega$-block of elements of $\vec{S}^{H(\tau)}$ has not been used for coding forcings.

\item If $F(\beta)$ is the $\forceP_{\beta}$-name of an $\aleph_1$-sized, ccc indestructible forcing $\dot{\forceQ}$, then we use $\dot{\forceQ}$ at stage $\beta$.
\end{enumerate}
\end{definition}

We note that two suitable forcings $\forceP^0$, $\forceP^1$ relative to a bookkeeping $F,F'$ and some common $E$ are not necessarily closed under taking products. Indeed it could be that both $\forceP^0$ and $\forceP^1$ use some $\omega$-block of trees form $\vec{S}^i$ but coding up different reals there. This, however is the only obstruction for closure under products, and we obtain that $\forceP^0 \times \forceP^1$ is suitable if and only if the set of trees used for coding by $\forceP^0$ and the set of trees used for coding by $\forceP^1$ are disjoint.

We will define an iteration of length $\omega_2$ as follows. First we will work towards a failure of $\Pi^1_3$-separation, more specifically we will create a model in which no pair of $\Sigma^1_3$-sets $A_m$ and $A_k$ can separate the pair of $\Pi^1_3$-sets $D^0,D^1$. This can be done in countably many steps. After we finished we will work towards a failure of $\Sigma^1_3$-separation, while preserving the fact that $\Pi^1_3$-separation already fails. 

This split of tasks is somewhat necessary as there is some tension between forcing a failure of $\Pi^1_3$ and $\Sigma^1_3$-separation with our technique.

\subsection{Towards a failure of $\Pi^1_3$-separation}
We pick a function $F: \omega_2 \rightarrow H(\omega_2)$ such that $\forall x \in H(\omega_2) F^{-1} (x)$ is unbounded in $\omega_2$. We let $E: H(\omega_2) \rightarrow \{2,3\}$ be arbitrary. The iteration we shall define is guided by both $F$ and $E$. 

Assume that we arrived at stage $\beta<\omega_2$ of the iteration and we obtained already the following
\begin{itemize}
\item The iteration $\forceP_{\beta}$ up to stage $\beta$ and a generic filter $G_{\beta}$ for it.
\item A notion of $\alpha_{\beta}$-suitability, which is a subset of plain suitability; This notion determines a set $N_{\beta}$ of indices of trees from $\vec{S}^i$, $i \in 3$ which we must not use for coding forcings in later factors of our to be defined iteration. This means that the forcing $\operatorname{Code}(x,\eta,i)$ must not be used, for any pair $(\eta,i) \in N_{\beta}$ and any real $x$.

The notion of $\alpha_{\beta}$-suitability also determines a set $R_{\beta} \subset H(\omega_2)^W$ of $\forceP_{\beta}$-names of reals and a function $E_{\beta}: R_{\beta} \rightarrow \{2,3\}$.

\end{itemize}

We assume that the bookkeeping $F$ at $\beta$ hands us $m,k \in \omega$ and the $\forceP_{\beta}$-name of a real $\dot{x}$. We write $x$ for $\dot{x}^{G_{\beta}}$.
Let us suppose that $m,k$ are G\"odelnumbers for two $\Sigma^1_3$-sets $A_m$ and $A_k$ with corresponding $\Sigma^1_3$-formulas $\varphi_m$ and $\varphi_k$.
We also assume that the bookkeeping $F$ visits $x,m,k$ for the very first time, that is there were no decisions already made concering $x,A_m$ and $A_k$ at an earlier stage.
We first ask whether there is an $\alpha_{\beta}$-suitable forcing $\forceQ \in W[G_{\beta}]$ such that 
\[ \forceQ \Vdash x \in A_m \cap A_k. \]
If the answer is yes, then we force with such a forcing $\forceQ= \dot{\forceQ}_{\beta}$ (pick the $<$-least such forcing in some previously fixed well-order). Note that after using $\forceQ$, $A_m$ and $A_k$ will have non-empty intersection for all later stages of our iteration as well, by upwards absoluteness of $\Sigma^1_3$-formulas. Thus, $A_m$ and $A_k$ can not be candidates for separating $D^0,D^1$ and we can move on in our iteration.

If the answer is no, then we argue as follows. In order for $A_m$ and $A_k$ to be potential candidates for separating $D^0,D^1$ we must demand that in the final model $A_m \cup A_k = \omega^{\omega}$. If there is an $\alpha_{\beta}$-suitable $\forceQ= ((\forceQ_{\zeta}, \dot{\mathbb{R}}_{\zeta}) \mid \zeta < \xi<\omega_1)$ guided by some $F'$ and some $E'$ such that $E'$ and $E_{\beta}$ agree on their common domain  and such that
\[ \forceQ \Vdash x \in A_m \]
then we consider the set $T_{\beta} \in W[G_{\beta}]$ of trees from $\vec{S}^i$ which are potentially used for coding by $\forceQ$. More precisely we let
\begin{align*}
T_{\beta} := \{ (\eta,i) \mid \exists  p_{\zeta} \in \forceQ_{\zeta} \exists \dot{x} \in W[G_{\beta}]^{\forceQ_{\zeta}} \\ p_{\zeta} \Vdash (\operatorname{Code} (\dot{x},\eta,i)=\dot{\forceR}_{\zeta} \}
\end{align*}
Note that as all forcings have the ccc $T_{\beta}$ is countable.
We let 
\[N_{\beta+1}:= N_{\beta} \cup T_{\beta}.\]
We also collect all the names of reals which are in the domain of $E'$ and extend $E_{\beta+1}$ in the following way
\begin{align*}
E_{\beta+1}:= E_{\beta} \cup E'
\end{align*}
Note that this results in a partial function $E_{\beta+1}$ from $H(\omega_2)^W \rightarrow \{2,3\}$ again, as we assumed that $E'$ and $E_{\beta}$ agree on their common domain. We define the next notion of suitability.

\begin{definition}
An iteration $\forceP=((\forceP_{\zeta},\dot{\forceQ}_{\zeta}) \mid \zeta < \xi < \omega_2)$ is $\alpha_{\beta+1}$-suitable if $\forceP$ is suitable relative to some $F$ and $E$ and additionally
\begin{itemize}
\item $E$ and $E_{\beta+1}$ agree on their common domain.
\item For every $\zeta< \xi$, every pair of $\forceP_{\zeta}$-names $\dot{x}, \dot{\eta}$ and every $p \in \forceP_{\zeta}$, if $p_{\zeta} \Vdash \dot{\forceQ}_{\zeta} =\operatorname{Code} (\dot{x},\dot{\eta},\dot{i})$, then $p_{\zeta} \Vdash (\dot{\eta},\dot{i}) \notin N_{\beta+1}$.
\end{itemize}  
\end{definition}

The key observation is that we know that no $\alpha_{\beta+1}$-suitable $\forceQ'$ guided by some $F'$ and some $E'$,  will force $x$ to become a member of $A_k$. Indeed otherwise we could form the product $\forceQ \times \forceQ'$ which is $\alpha_{\beta}$-suitable again and, again by upwards absoluteness of $\Sigma^1_3$-formulas, will force $x \in A_m \cap A_k$ which is a contradiction to our prior assumption. 

The very same argument can be applied if we can not force $x$ into $A_m$ but into $A_k$ and along the way we will define the notion of $\alpha_{\beta+1}$-suitable in the same fashion. Hence we obtain the following dichotomy under the assumption that there is no $\alpha_{\beta}$-suitable $\forceQ$ which places $x$ into both $A_m$ and $A_k$ and under the assumption that $A_m$ and $A_k$ are reasonable candidates for separating $D^0,D^1$. Either there is an $\alpha_{\beta+1}$-suitable $\forceQ$, which will force 
$x$ into $A_m$ but no $\alpha_{\beta+1}$-suitable forcing puts $x$ into $A_k$. Or there is an $\alpha_{\beta+1}$-suitable (note that this notion of $\alpha_{\beta+1}$-suitable is not the same as the one from the last sentence) $\forceQ$ which  will force $x$ into $A_k$ but no $\alpha_{\beta+1}$-suitable forcing forces $x$ into $A_m$.

Without loss of generality we assume that $x$ can be forced into $A_k$ with an $\alpha_{\beta+1}$-suitable $\forceQ$ (chosen to be the $<$-least such forcing), but can not be forced into $A_m$. In this situation we will use $\operatorname{Code} (x,\eta, 0)$ for some $\eta$ such that $(\eta,0) \notin N_{\beta+1}$ as the $\beta$-th factor $\dot{\forceQ}_{\beta}$ of our iteration and immediately pass to a fresh real $x' \ne x$ which has not been considered by the bookkeeping yet. 

We repeat the above reasoning with $x'$ instead of $x$ and with the new notion of $\alpha_{\beta}+1$-suitability.
That is, we ask whether $x'$ can be forced into $A_m$ and $A_k$ with an $\alpha_{\beta}+1$-suitable forcing. If yes, then we put $x'$ into $A_m$ and $A_k$. 

If no, then we assume first that there is an $\alpha_{\beta}+1$-suitable forcing $\forceQ$ for which $\forceQ \Vdash x' \in A_m$ and define the notion of $\alpha_{\beta+2}$-suitable in the very same way to our definition of $\alpha_{\beta+1}$-suitability.
Then we force with $\operatorname{Code} (x',\eta',1)$ for some $\eta'$ such that $(\eta',1) \notin N_{\beta+2}$. Note that this implies that $x,x' \in A_m$ yet $x \in D^1$ and $x' \in D^0$, thus $A_m$ and $A_k$ can not separate $D^0$ and $D^1$, as long as we keep $x \in D^1$ and $x' \in D^0$ in our iteration which we can guarantee via defining $\alpha_{\beta}+3$-suitable to be
$\alpha_{\beta}+2$-suitable with the additional demand that we will not use $\operatorname{Code} (x,\eta,0)$ and $\operatorname{Code}(x',\eta,1)$ for all $\eta$ as a factor of our iteration. Finally we let $\alpha_{\beta+1}$-suitable  be just $\alpha_{\beta}+3$-suitable and we continue with $F(\beta+1)$ in our iteration.

If we can not force $x'$ into both $A_m$ and $A_k$ but there is an $\alpha_{\beta}+1$-suitable forcing which forces $x$ into $A_k$, then we argue in the dual way and again obtain first a notion of $\alpha_{\beta+2}$-suitability. Then we decide to force with $\operatorname{Code} (x',\eta',0)$. Note that as a consequence
$x \in A_m \cap D^0$ and $x' \in A_k \cap D^0$, thus $A_m, A_k$ can not separate $D^0,D^1$ as long as we do not use $\operatorname{Code} (x,\eta,0)$ and $\operatorname{Code}(x',\eta,1)$ for all $\eta$ as a factor of our iteration. We define $\alpha_{\beta}+3$-suitable to be
$\alpha_{\beta}+2$-suitable with the additional demand that we will not use $\operatorname{Code} (x,\eta,0)$ and $\operatorname{Code}(x',\eta,1)$ for all $\eta$ as a factor of our iteration. Finally we let $\alpha_{\beta+1}$-suitable  be just $\alpha_{\beta}+3$-suitable and we continue with $F(\beta+1)$ in our iteration.

If $F(\beta)= (x,m,k,i)$ for $m,k$ being G\"odelnumbers of $\Pi^1_3$-sets, we for now, follow what $F$ and $E'=E_{\beta} \upharpoonright \operatorname{dom} (E_{\beta}) \cup E \upharpoonright ( H(\omega_2) \backslash \operatorname{dom} (E_{\beta} )) $ tell us to do.

If $F(\beta)$ is a $\forceP_{\beta}$-name of an $\aleph_1$-sized, ccc indestructible forcing, then we force with it.

Note that, as there are only countably many pairs of $\Sigma^1_3$-sets, after countably many stages $\beta_0$ of this iteration we will arrive at a universe $W[G_{\beta_0}]$ such that 
\[ W[G_{\beta_0}] \models ``D^0,D^1 \text{ can not be separated by a pair of $\Sigma^1_3$-sets.$"$} \]
Moreover the notion $\alpha_{\beta_0}$-suitable is such that in all outer universes of $W[G_{\beta_0}]$ obtained via an $\alpha_{\beta_0}$-suitable forcing, the $\Pi^1_3$-separation property continues to fail. The set of $\alpha_{\beta_0}$-suitable forcings come together with a countable set $N = N_{\beta_0} \in W[G_{\beta_0}]$ of trees from $\vec{S}^i$ which we must not use in all our coding forcings to come; and with a 
partial function $E=E_{\beta_0}$ mapping from $H(\omega_2)^W$ to $\{2,3\}$.

Now we will turn to the $\Sigma^1_3$-separation property.

\subsection{Towards a failure of $\Sigma^1_3$-separation}
We work with $W[G_{\beta_0}]$ as our ground model. We restrict ourselves to only use $\alpha_{\beta_0}$-suitable forcings from now on.
We assume that we arrived at stage $\beta$ of our iteration and we already have
$\forceP_{\beta}$, $G_{\beta} \subset \forceP_{\beta}$, a notion of $\alpha_{\beta}$-suitability together with a function $E_{\beta} : H(\omega_2)^W \rightarrow \{2,3\}$.
Assume that $F(\beta)= (\dot{x},\dot{x'}, m,k,i)$ for $m,k$ being G\"odelnumbers of $\Pi^1_3$-sets $B_m,B_k$, $\dot{x}$, $\dot{x'}$ two $\forceP_{\beta}$-name of two distinct reals $x$ and $x'$, which have not been considered by the bookkeeping before (and moreover are not a factor of one of the virtual forcings $\forceQ$), and $i \in 4$.
We assume that the $<$-least forcing names of $x$ and $x'$ both do not belong to the domain of $E_{\beta}$, which we surely can do.

Next we assume without loss of generality that either 
\begin{enumerate}
\item $x,x' \in B_m$ or 
\item $x \in B_m$ and $x' \in B_k$ or 
\item $x,x' \in B_k$.
\end{enumerate}
This assumption is again harmless, as we must assume that $x,x'$ will end up in $B_m$ or $B_k$ as otherwise $B_m,B_k$ would not partition the reals and hence would not be candidates for separating sets anyway. We will work through the tree cases now.
\begin{enumerate}
\item If $x,x' \in B_m$ we let
\[ \dot{\forceQ}_{\beta} := \operatorname{Code} (x,\eta,2) \times \operatorname{Code} (x',\eta' ,3) \]
for some $\eta, \eta'$ whose trees starting at the $\eta$-th  (and $\eta'$-th)  $\omega$-block of trees from $\vec{S}^i$ have not been used yet for coding by some previous forcing and also  $[\omega \eta, \omega \eta +\omega) \cap N$ and $[\omega \eta', \omega \eta' + \omega) \cap N$ are both empty.

Note that as a consequence $x \in D^2 \cap B_m$ and $x' \in D^3 \cap B_m$, hence $B_m, B_k$ can not separate $D^2,D^3$ as long as $x$ remains in $D^2$ for the rest of the iteration and $x'$ remains in $D^3$ for the rest of the iteration. But this is can easily be accomplished via defining that $E_{\beta_{\alpha}+1} (\tau) = 2$, for $\tau$ the $<$-least forcing name of $x$ and $E_{\beta_{\alpha}+1} (\tau')=3$ for $\tau'$ the $<$-least forcing name of $x'$ and $E_{\beta_{\alpha}+1} =E_{\beta_{\alpha}}$ elsewhere, and using only $\alpha_{\beta}$-suitable forcings with respect to $E$ from now on.

\item If $x \in B_m$ and $x' \in B_k$ we define
\[ \dot{\forceQ}_{\beta} := \operatorname{Code} (x,\eta ,2) \times \operatorname{Code} (x' ,\eta',2) \]
for $\eta$ and $\eta'$ chosen as above. Note that $x,x'$ witness that $B_m,B_k$ can not separate $D^2,D^3$, as long as $x$ stays in $D^2$ and $x'$ stays in $D^3$ for the rest of the iteration. This can easily be accomplished via defining $H'$ in a similar way to above.

\item Last, if $x,x' \in B_k$ then we proceed similar to the first case. We leave the straightforward modifications to the reader.

\end{enumerate}

This ends the definition of what we do when working towards a failure of $\Sigma^1_3$-separation. Note that we will be finished after $\beta_1$-many stages, for some $\beta_1 <\omega_1$, ending up with such a desired universe. Moreover we do have a notion of $\alpha_{\beta_1}$-suitable which ensures that in all further generic extensions, obtained with an $\alpha_{\beta_1}$-suitable forcing, both $\Sigma^1_3$ and $\Pi^1_3$-separation continue to fail.

\subsection{Tail of the iteration}

For the tail of the iteration we will just take care of $\mathsf{MA} (\mathcal{I})$ and ensure that $D^2$ and $D^3$ have non-empty intersection, that is we ensure that for every real $x$, either $x$ is coded into $\vec{S}^2$ or into $\vec{S}^3$.
This can be done with  $\alpha_{\beta_1}$-suitable forcings.

\section{Open Questions}

We end with a couple of natural questions whose answers would need new ideas.

\begin{question}
Is it possible to extend the method of forcing the failure of $\Pi^1_4$-separation to produce universes where $\Sigma$ and $\Pi$-separation fails for all projective levels $\ge 3$?
\end{question}
An interesting question is the relation of mild forcing axioms and the globale failure of projective separation. There is a surprising and substantial tension between both principles, as e.g. $\BPFA$ together with the anti-large cardinal axiom $\omega_1=\omega_1^L$ outright implies that $\Sigma^1_3$-uniformization holds (see \cite{Ho3} for a proof). 
\begin{question}
Is Martin's Axiom consistent with the the  failure of $\Sigma^1_3$ and $\Pi^1_3$-separation? Is Martin;s Axiom consistent with a global failure of projective separation.
\end{question}
Another interesting family of questions which are wide open is to investigate the failure of separation in the presence of regularity properties which are implied by projective determinacy. Note that e.g. in Solovay's model, there is a $\Pi^1_2$-set which can not be uniformized by an ordinal definable function, by L\'evy's argument.
\begin{question}
Given an inacessible, is there a universe where each projective set is Lebesgue measurable and projective separation fails?
\end{question}

\end{document}